\title{Korenblum-Type Extremal Problems in Bergman Spaces}  
\author[P.Chakraborty]{Pritha Chakraborty}
\address{Department of Mathematics and Statistics, Texas Tech
University, Lubbock, TX 79409} \email{pritha.chakraborty@ttu.edu}
\author[A.Solynin]{Alexander Solynin}
\address{Department of Mathematics and Statistics, Texas Tech
University, Lubbock, TX 79409} \email{alex.solynin@ttu.edu}
\newtheorem{definition}{Definition}[section]
\newtheorem{theorem}[definition]{Theorem}
\newtheorem{lemma}[definition]{Lemma}
\newtheorem{corollary}[definition]{Corollary}
\newtheorem{conjecture}{Conjecture}
\newtheorem{lemmaA}{Lemma}
\newtheorem{problem}{Problem}
\newcommand{\blemmaA}{\begin{lemmaA}}
\newcommand{\elemmaA}{\end{lemmaA}}
\newtheorem{proposition}[definition]{Proposition}
\newcommand{\bprop}{\begin{proposition}}
\newcommand{\eprop}{\end{proposition}}
\newcommand{\bt}{\begin{theorem}}
\newcommand{\et}{\end{theorem}}
\newcommand{\bc}{\begin{corollary}}
\newcommand{\ec}{\end{corollary}}
\newcommand{\bcon}{\begin{conjecture}}
\newcommand{\econ}{\end{conjecture}}
\begin{document}%


\begin{abstract}%
We shall study non-linear extremal problems in Bergman space $\mathcal{A}^2(\mathbb{D})$. We show the existence of the solution and that the extremal functions are bounded. Further, we shall discuss special cases for polynomials, investigate the properties of the solution and provide a bound for the solution. This problem is an equivalent formulation of B. Korenblum's conjecture, also known as Korenblum's Maximum Principle: for $f$, $g\in \mathcal{A}^2(\mathbb{D})$, there is a constant $c$, $0<c<1$ such that if $|f(z)|\leq |g(z)|$ for all $z$ such that $c<|z|<1$, then $\|f\|_2\leq \|g\|_2$. The existence of such $c$ was proved by W. Hayman but the exact value of the best possible value of $c$, denoted by $\kappa$, remains unknown.
\end{abstract}%

\maketitle%

\section{Korenblum's Maximum Principle: History and recent results}

Let $\mathbb{D}=\{z\in\mathbb{C}:\ |z|<1\}$ be the open unit disk and $A(c,1)=\{z\in\mathbb{C}:\, c<|z|<1\}$ be the annulus defined in the complex plane $\mathbb{C}$. Then, the Bergman space $\mathcal{A}^2(\mathbb{D})$ is the class of functions $f$ analytic in $\mathbb{D}$ with
\begin{eqnarray*}%
\| f \|_{2}= \left( \frac{1}{\pi} \int \limits_{\mathbb{D}} |f(z)|^2\; dA(z) \right)^{\frac{1}{2}} <\infty,
\end{eqnarray*}%
where $dA=r dr d\theta$ denotes Lebesgue area measure. Stefan Bergman developed this theory which was highly inspired from the related theory of Hardy spaces. An extensive study of Bergman spaces can be found in \cite{DS,HKZ,B1}.  
\par
The classical Maximum Modulus Theorem states that if a function $f$ is analytic in $\mathbb{D}$ and $|f(z)|\leq K$ in $A(c,1)$ for some constant $K$ and some fixed constant $c$, then $|f(z)|\leq K$ for all $z$ in $\mathbb{D}$. Hence $\|f\|_2\leq K=\|K\|_2$. Then it is quite natural to ask what happens if $K$ is replaced by any arbitrary non-constant analytic function. On this note, Boris Korenblum \cite{K1} conjectured in $1991$ that for $f,\ g \in \mathcal{A}^2(\mathbb{D})$, there is a constant $c$, $0<c<1$, such that if
\begin{equation*}\label{eq 2.1a}%
| f(z)| \leq |g(z)|,\quad z\in A(c,1)
\end{equation*}%
then
\begin{equation*}\label{eq 2.2a}%
\| f \|_2 \leq \| g \|_2
\end{equation*}%
In that paper, he proved a weaker version of this conjecture with an additional assumption that each zero of $f$ is a zero of $g$. It is easy to observe that if $g$ has no zeros in $\mathbb{D}$, then the quotient $f/g$ is analytic. Therefore, by the Maximum Modulus Theorem, $|f(z)|\leq |g(z)|$ in $\mathbb{D}$ which further implies that $\|f\|_2\leq \|g\|_2$. However, Hayman showed that this conclusion is not true in general if we replace $\mathbb{D}$ by $A(c,1)$ and if $g$ has a zero in $\mathbb{D}$. Precisely, he considered $f(z)=a$, $g(z)=z$ with $1/\sqrt{2}<a<c$ to show that the conclusion fails if $c>1/\sqrt{2}$. Therefore, this conjecture is also sometimes known as Korenblum's Maximum Principle or Bergman space Maximum Principle. Hayman \cite{H1} proved the conjecture in $1999$ with $c=0.04$. In this paper, we call the best possible value of such $c$ for which Korenblum's Maximum Principle is true for all functions in $\mathcal{A}^2(\mathbb{D})$ as {\it Korenblum's constant} and denote it by $\kappa$. The exact value of $\kappa$ is not yet known. Various partial results came in a sequence of papers by Korenblum, Richards, O.~Neil, Matero and Schwick \cite{K2,K3,J,SCH}. Towards finding the sharp value of $\kappa$, A. Hinkannen \cite{H2} improved the lower bound of $\kappa$ to $0.15173$. He also generalized the result in $\mathcal{A}^p(\mathbb{D})$ for $p\geq 1$. In addition, a series of examples obtained by Wang \cite{W1,W2,W3,W5,W6,W7} over the years have improved the upper bound of $\kappa$ to $0.6778994$. In recent papers of Wang \cite{W8,W6} the best known bounds to date can be found which are, $0.28185 <\kappa < 0.6778994$. 
 \par%
 Furthermore, it became quite natural to ask what happens if we replace (i) $\mathcal{A}^p(\mathbb{D})$ by $\mathcal{B}$, where $\mathcal{B}$ is the class of analytic functions in $\mathbb{D}$ with norm $\|f\|_{\mathcal{B}}$ and (ii) $A(c,1)$ by any arbitrary set $E\subset \mathbb{D}$. On this note, Hayman and Danikas \cite{HD} proved Korenblum-type results when (a) $\mathcal{B}$ is an Hardy space $\mathcal{H}^p$ for $0<p<\infty$ and $\mathcal{H}^{\infty}$, (b) $\mathcal{B}$ is the space of disk algebras. In addition, if $\mathcal{B}$ is a Fock space $F$, Schuster \cite{SC} proved that there is a positive constant $c$ with the property that whenever $f$ and $g$ are entire functions satisfying $|f(z)|\leq |g(z)|$ for $|z| > c$, then $\|f\|_F\leq \|g\|_F$ with $c=0.54$.
\par%
Pacing towards a slightly different direction, let us introduce the readers to the theory of extremal problems in Bergman spaces. Since Bergman space can be thought as an extension of Hardy space, analogous counterparts of problems in Hardy spaces are studied in Bergman spaces. The theory of general extremal problems in Hardy spaces is developed in the seminal works of S.~Ya.~Khavinson and Rogosinski \cite[Chapter 8]{D1}. On a similar note, minimal area problems have been extensively studied by Shapiro and Solynin \cite{AShS1,AShS3}. However, the standard techniques of functional analysis which worked quite smoothly for Hardy spaces failed heavily for Bergman spaces. There have been attempts to develop a theory of dual extremal problems for Bergman spaces and partial results were obtained in Vuk\'{o}tic, Khavinson and Stessin \cite{V1,KS}. Therefore, among many basic unsolved questions, the theory of extremal problems in Bergman spaces is still at a very beginning. On a brighter note, Sheil-Small in \cite{SH} recently solved the extremal problem of finding the explicit extremal function which minimizes the $\mathcal{A}^2$ norm for functions analytic and non-vanishing in $\mathbb{D}$ and of the form $f(z) = 1+a z + a_2z^2 +\ldots$, with $a\geq0$ given.
\par%
Let us briefly discuss the contents of our paper. In Section $2$, we introduce extremal problems \ref{prob 1}, \ref{prob 2} and \ref{prob 3} which can be thought of as the equivalent formulations of Korenblum's Maximum Principle for finding the best possible constant $\kappa$. In Section $3$, we review some of the preliminary results from the theory of Bergman spaces which we require for proving our results discussed in later sections. Our main results are demonstrated from Section $4$ onwards which are focussed primarily on Problems \ref{prob 1} and \ref{prob 2}. In Section $4$, we prove the existence of the extremal pair of functions which solves Problem \ref{prob 1} and \ref{prob 2} and further discuss the properties of the extremal pair. However, due to the complex nature of the non-linear functional and the absence of convexity in the functional, it has not been possible to comment on the uniqueness of the extremal pair of functions. In Section $5$, the properties of the extremal function are discussed thoroughly and in particular, we give the explicit bounds to extremal function of Problems \ref{prob 1} and \ref{prob 2}. In Section $6$, we discuss special cases of Problem \ref{prob 1} for polynomials, Blashcke products and bounded functions. We prove that the extremal pair for the general class of analytic functions is bounded and further solve the extremal problem explicitly for linear polynomials. In Section $7$, we solve the ``dual'' Problem A and leave Section $8$ for open questions to discuss.


\section{Versions of Korenblum's Maximum Principle}
\setcounter{equation}{0}  %

Consider the following general extremal problem.
\begin{problem}\label{prob 1}%
Given $0<c<1$,
\begin{enumerate}
\item[{\rm 1.}] Find%
\begin{equation}\label{eq 2.1}
F(c) := \inf \limits_{f,g\in S^1} \sup \limits_{c\leq |z|<1} \left |\frac{f(z)}{g(z)} \right |
\end{equation}%
where $S^1=\{ f\in \mathcal{A}^2(\mathbb{D}):\ \|f\|_2=1\}$ denotes a unit ball in Bergman space $\mathcal{A}^2(\mathbb{D})$.

\item[{\rm 2.}] Find an extremal pair of functions $f,g\in S^1$ such that $F(c)=\left|\frac{f(z_0)}{g(z_0)}\right|$ where $z_0\in(c,1)$.
\end{enumerate}

\end{problem}%
We call the function $F(c)$ introduced in (\ref{eq 2.1}) as {\it Korenblum's function for Problem \ref{prob 1}}. Every pair of functions $(f_0,g_0)$ which solves Problem \ref{prob 1} is an extremal pair for Problem \ref{prob 1}. The triple $(f_0,g_0,z_0)$ which solves Problem \ref{prob 1} is called an extremal triple for Problem \ref{prob 1}. It is quite straightforward to note that, $0< F(c)\leq 1$. Also, $F(c)$ is a non-increasing function in $(0,1)$ because for $0<c_1<c_2<1$, $A(c_2,1)\subseteq A(c_1,1)$ and thus the supremum of the smaller set $A(c_2,1)$ is less than the supremum of the larger set $A(c_1,1)$. Hence, if $F(c_0)=1$, then $F(c)=1$ for all $0<c\leq c_0$. Therefore, $F(c)=1$ for $0<c\leq \kappa$ and $F(c)<1$ for $\kappa<c<1$, where $\kappa$ is {\it Korenblum's constant}.

\begin{problem}\label{prob 2}%
Given $0<c<1$, 
\begin{enumerate}
\item[{\rm 1.}] Find%
\begin{equation} \label{eq 3.1}%
F_B(c):=\sup \limits_{f,g\in FG(c)} \left(\|f\|_2^2-\|g\|_2^2\right),
\end{equation} %
where
\begin{eqnarray*}%
FG(c):= \{ f,g \in \mathcal{A}^2(\mathbb{D}):\ \|f\|_2 \leq 1,\ \|g\|_2 \leq 1,\ |f(z)| \leq |g(z)|,\\
 \forall z: c\leq |z| <1 \}
\end{eqnarray*}%
is the set of all admissible pairs for $F_B(c)$.

\item[{\rm 2.}] Find an extremal pair of functions $f,g\in FG(c)$ such that $F_B(c)=\|f\|_2^2-\|g\|_2^2$.
\end{enumerate}
\end{problem} %
 We call the function $F_B(c)$ introduced in (\ref{eq 3.1}) as {\it Korenblum's function for Problem \ref{prob 2}}. Every pair of functions $(f_0,g_0)$ which solves Problem \ref{prob 2} is an {\it extremal pair for Problem \ref{prob 2}}. It is easy to see that $0\leq F_B(c)\leq 1$. Further, note that the extremal function $F_B(c)$ is a non-decreasing function. For $0<c_1<c_2<1$, $FG(c_1)\subseteq FG(c_2)$ and therefore the supremum of the smaller set $FG(c_1)$ is less than the supremum of larger set $FG(c_2)$. Therefore, $F_B(c)=0$ for $0<c\leq \kappa$ and $F(c)<1$ for $\kappa<c<1$, where $\kappa$ is {\it Korenblum's constant}.

\begin{problem}\label{prob 3}%
Given $0<c<1$, 
\begin{enumerate}
\item[{\rm 1.}] Find%
\begin{equation}\label{eq 3.1a}%
G(c)= \sup \limits_{f,g\in S^1} \inf \limits_{c\leq |z|<1} \left( |g(z)|^2-|f(z)|^2 \right) 
\end{equation}%

\item[{\rm 2.}] Find an extremal pair of functions $f,g\in S^1$ such that $G(c)=|f(z_1)|^2-|g(z_1)|^2$ where $z_1\in(c,1)$.
\end{enumerate}
\end{problem}%

The primary goal of this paper is to understand Korenblum's problem in the setting of extremal problems. The complete solution of either one of the Problems \ref{prob 1}, \ref{prob 2} or \ref{prob 3} will solve {\it Korenblum's Maximum Principle}. 


\section{Preliminary results on Bergman spaces}
\setcounter{equation}{0}  %
In this section, we discuss some well known facts from the theory of Bergman spaces. The proofs of these results can be found in \cite{DS}.
\begin{theorem}%
Let $1 <p<\infty$. If $f \in \mathcal{A}^p$, then the partial sums of the Taylor series converge in norm to $f$.
\end{theorem}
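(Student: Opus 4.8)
The plan is to deduce norm convergence of the Taylor partial sums from two ingredients: (i) polynomials are dense in $\mathcal{A}^p$ for every $0<p<\infty$, and (ii) the partial-sum operators $S_n$, defined by $S_nf(z)=\sum_{k=0}^{n}a_kz^k$ when $f(z)=\sum_{k\ge0}a_kz^k$, are uniformly bounded on $\mathcal{A}^p$ once $1<p<\infty$. Here and below I write $\|h\|_p=\bigl(\tfrac1\pi\int_{\mathbb{D}}|h|^p\,dA\bigr)^{1/p}$ for the $\mathcal{A}^p$ norm. Granting (i) and (ii), the result is the usual $3\varepsilon$-approximation: given $f\in\mathcal{A}^p$ and $\varepsilon>0$, choose a polynomial $P$ with $\|f-P\|_p<\varepsilon$; then $S_nP=P$ for all $n\ge\deg P$, so
\[
\|f-S_nf\|_p\le\|f-P\|_p+\|S_n(P-f)\|_p\le(1+C_p)\,\varepsilon,\qquad C_p:=\sup_n\|S_n\|_{\mathcal{A}^p\to\mathcal{A}^p}.
\]

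For (i) I would use dilation. For $0<\rho<1$ put $f_\rho(z)=f(\rho z)$; a change of variables gives $\|f_\rho\|_p\le\rho^{-2/p}\|f\|_p$, and one checks $\|f-f_\rho\|_p\to0$ as $\rho\to1^-$ (the pointwise convergence $f_\rho\to f$ on $\mathbb{D}$ together with $\|f_\rho\|_p\to\|f\|_p$ yields $L^p$-convergence via Scheff\'e's lemma). Since each $f_\rho$ is analytic on a neighborhood of $\overline{\mathbb{D}}$, its Taylor series converges uniformly on $\overline{\mathbb{D}}$, hence in $\mathcal{A}^p$; thus $f_\rho$, and therefore $f$, lies in the closure of the polynomials.

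For (ii), which I expect to be the crux, I would pass to integral means over circles. By polar coordinates, $\|h\|_p^p=\int_0^1 M_p^p(r,h)\,2r\,dr$ where $M_p^p(r,h)=\tfrac1{2\pi}\int_0^{2\pi}|h(re^{i\theta})|^p\,d\theta$. On the circle $|z|=r$ the function $\theta\mapsto f(re^{i\theta})$ has Fourier series $\sum_{k\ge0}a_kr^k e^{ik\theta}$ — no negative frequencies, since $f$ is analytic — and $S_nf(re^{i\theta})$ is exactly its $n$-th Fourier partial sum. By the M. Riesz theorem, the Fourier partial-sum operator on $L^p(\mathbb{T})$ has operator norm bounded by a constant $C_p$ depending only on $p\in(1,\infty)$ and independent of $n$; hence $M_p(r,S_nf)\le C_p\,M_p(r,f)$ for every $r$ and $n$. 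Integrating against $2r\,dr$ gives $\|S_nf\|_p\le C_p\|f\|_p$ uniformly in $n$, which is (ii).

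The main obstacle is the uniform bound in step (ii); it rests entirely on the $L^p(\mathbb{T})$-boundedness of the Riesz projection (equivalently, of the conjugate-function operator), which is precisely why $1<p<\infty$ is required and why the statement fails for $p=1$. The remaining steps are routine.
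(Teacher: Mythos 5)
Your argument is correct: the density of polynomials via dilations, the identification of $S_nf$ on each circle $|z|=r$ with a Fourier partial sum, the uniform bound $M_p(r,S_nf)\le C_pM_p(r,f)$ from the M.~Riesz theorem (valid precisely for $1<p<\infty$), and the concluding $3\varepsilon$ argument all fit together as you describe. The paper itself offers no proof of this statement --- it is listed among the preliminaries with a citation to Duren and Schuster --- and your proof is essentially the standard one given in that reference, so there is nothing to compare beyond noting the agreement.
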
%

\begin{lemma}\label{lem-norm}%
If $\displaystyle f(z)=\sum_{k=0}^{\infty} a_k z^k \in \mathcal{A}^2(\mathbb{D})$, then
\begin{equation}\label{eq 1.2}%
\|f\|_2= \left( \sum_{k=0}^{\infty} \frac{|a_k|^2}{k+1}\right)^{\frac{1}{2}}
\end{equation}%
\end{lemma}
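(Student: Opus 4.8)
The plan is to deduce the identity from the pairwise orthogonality of the monomials in $\mathcal{A}^2(\mathbb{D})$ together with the norm convergence of Taylor partial sums. First I would record the elementary computation in polar coordinates: writing $z=re^{i\theta}$ so that $dA(z)=r\,dr\,d\theta$, and using $\int_0^{2\pi}e^{i(j-k)\theta}\,d\theta=2\pi\delta_{jk}$ followed by $\int_0^1 2r^{2k+1}\,dr=1/(k+1)$, one obtains
\[
\frac{1}{\pi}\int_{\mathbb{D}} z^j\,\overline{z^k}\,dA(z)=\begin{cases}\dfrac{1}{k+1}, & j=k,\\ 0, & j\neq k.\end{cases}
\]
Thus the monomials $z^k$ are orthogonal in $\mathcal{A}^2(\mathbb{D})$ and $\|z^k\|_2^2=1/(k+1)$.

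Next I would apply this to the Taylor partial sums $s_n(z)=\sum_{k=0}^n a_kz^k$. Since $s_n$ is a finite sum, expanding $|s_n|^2=s_n\overline{s_n}$ and integrating term by term is unproblematic, and the orthogonality above gives at once
\[
\|s_n\|_2^2=\sum_{k=0}^n\frac{|a_k|^2}{k+1}.
\]
To pass to $f$, I would invoke the theorem stated above on norm convergence of Taylor series in $\mathcal{A}^p$ (with $p=2$): since $f\in\mathcal{A}^2(\mathbb{D})$ we have $\|s_n-f\|_2\to 0$, hence $\|s_n\|_2\to\|f\|_2$ by the triangle inequality, and therefore
\[
\|f\|_2^2=\lim_{n\to\infty}\sum_{k=0}^n\frac{|a_k|^2}{k+1}=\sum_{k=0}^\infty\frac{|a_k|^2}{k+1},
\]
the series on the right converging because its partial sums increase and are bounded by $\|f\|_2^2$.

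The only step requiring care — and the one I would highlight as the crux — is this last passage from the finite-sum identity for $s_n$ to the infinite-sum identity for $f$: a direct ``interchange of summation and integration'' in $\int_{\mathbb{D}}|f|^2\,dA$ would need an extra justification (dominated or monotone convergence applied to $\sum_{j,k}a_j\overline{a_k}z^j\overline{z^k}$), so routing the argument through norm convergence of partial sums, which is exactly the content of the hypothesis $f\in\mathcal{A}^2$ as packaged by the quoted theorem, is the clean way to close the gap. Everything else is the routine polar-coordinate computation displayed above.
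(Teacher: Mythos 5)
Your proof is correct and is essentially the standard argument: the paper itself gives no proof of this lemma, deferring to Duren--Schuster, and your route (orthogonality of the monomials with $\|z^k\|_2^2=1/(k+1)$, the finite-sum identity for the partial sums, and then the norm convergence of Taylor partial sums quoted just above the lemma to pass to the limit) is exactly the textbook derivation the authors have in mind. Your closing remark correctly identifies the one point needing care, namely that the termwise interchange is justified through norm convergence rather than by a bare swap of sum and integral.
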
%

\begin{lemma}\label{lem-bounded}%
Let $f$ be a bounded analytic function on the unit disk $\mathbb{D}$. Then  $f \in \mathcal{A}^p(\mathbb{D})$.
\end{lemma}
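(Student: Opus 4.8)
The plan is to estimate the Bergman $p$-norm of $f$ directly from the supremum bound, using nothing more than the fact that $\mathbb{D}$ has finite area. Since $f$ is bounded and analytic on $\mathbb{D}$, set $M := \sup_{z\in\mathbb{D}}|f(z)| < \infty$. Analyticity guarantees that $f$, and hence $|f|^p$, is continuous on $\mathbb{D}$, so the integral defining $\|f\|_p$ is well defined as a (possibly infinite) Lebesgue integral.

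The main step is a pointwise domination followed by integration. For every $z\in\mathbb{D}$ we have $|f(z)|^p \le M^p$. Integrating this inequality against the normalized area measure $\frac{1}{\pi}\,dA$ and using that $\frac{1}{\pi}\int_{\mathbb{D}} dA(z) = 1$ (the disk has area $\pi$), we obtain
\[
\|f\|_p^p = \frac{1}{\pi}\int_{\mathbb{D}} |f(z)|^p\,dA(z) \le \frac{1}{\pi}\int_{\mathbb{D}} M^p\,dA(z) = M^p,
\]
so $\|f\|_p \le M < \infty$, which is precisely the statement that $f\in\mathcal{A}^p(\mathbb{D})$.

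There is no genuine obstacle in this argument; the only point worth flagging is that the estimate uses the finiteness of the area of $\mathbb{D}$ in an essential way (the same reasoning would fail over an unbounded domain such as $\mathbb{C}$). It is also worth remarking that the bound $\|f\|_p \le \|f\|_\infty$ is sharp, with equality for constant functions, and that the argument in fact yields the continuous inclusion $\mathcal{H}^\infty(\mathbb{D}) \hookrightarrow \mathcal{A}^p(\mathbb{D})$ of norm one.
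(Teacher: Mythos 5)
Your proof is correct and follows essentially the same route as the paper's: dominate $|f(z)|^p$ pointwise by the $p$-th power of the sup bound and integrate over the finite-area disk. The paper merely integrates first over $\mathbb{D}_\rho$ and lets $\rho\to 1$ (and in doing so writes $M$ where $M^p$ is meant); your direct version is cleaner and gives the sharp bound $\|f\|_p\le M$.
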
%

\begin{proof}%
Since $f$ is a bounded analytic function on the unit disk $\mathbb{D}$, then $|f(z)| \leq M$ for all $z \in \mathbb{D}$. Therefore, for some $0<\rho<1$, 
\begin{align*}%
\|f\|^p_{\mathcal{A}^p(\mathbb{D}_\rho)} =  \frac{1}{\pi}\int \limits_{\mathbb{D}_\rho} |f(z)| ^p\, dA(z) &=\frac{1}{\pi} \int \limits_{r=0}^{\rho} \int \limits_{\theta=0}^{2\pi} \left |f(re^{i\theta}) \right | ^p\ r\,dr\,d\theta \\
&\leq \frac{M}{\pi}\ \int_{r=0}^{\rho} \int_{\theta=0}^{2\pi} r\,dr\,d\theta \\
&=M \rho ^2.
\end{align*}%
As $\rho \rightarrow 1$, this gives $\| f \| ^p_{\mathcal{A}^p(\mathbb{D})} = M \ <\infty$. Therefore, $f \in \mathcal{A}^p(\mathbb{D})$. 
\end{proof}%

\begin{lemma}%
If $\displaystyle f(z)=\sum_{k=0}^{\infty} a_k z^k \in \mathcal{A}^2(\mathbb{D})$, then for $0<\rho<1$, $f_\rho(z)=f(\rho z) \in \mathcal{A}^2(\mathbb{D})$ and 
\begin{equation}\label{eq 1.3a}%
\|f_{\rho}\|_2= \left( \sum_{k=0}^{\infty} \frac{\rho^{2k} |a_k|^2}{(k+1)}\right)^{\frac{1}{2}}
\end{equation}%
Also, if $\|f\|_2<\infty$ and $f \not \equiv$ constant, then $\|f_{\rho}\|_2$ strictly increases from $|f(0)|$ to $\|f\|_2$ as $\rho$ runs from $0$ to $1$. Moreover,
\begin{equation}\label{eq 1.3b}%
\frac{d}{d\rho}\left( \|f_{\rho}\|_2^2 \right)= \sum_{k=1}^{\infty} \frac{2k \rho^{2k-1} |a_k|^2}{(k+1)}
\end{equation}%
\end{lemma}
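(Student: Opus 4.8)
The statement to prove is a standard lemma about the dilated function $f_\rho(z) = f(\rho z)$: that it lies in $\mathcal{A}^2(\mathbb{D})$, that its norm is given by the stated power series, that $\|f_\rho\|_2$ increases strictly from $|f(0)|$ to $\|f\|_2$ as $\rho$ goes from $0$ to $1$ (for non-constant $f$), and that $\frac{d}{d\rho}(\|f_\rho\|_2^2)$ has the stated series expansion.

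So let me sketch a proof.

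My proof strategy is to reduce everything to Lemma~\ref{lem-norm} applied to the Taylor coefficients of $f_\rho$. First I would observe that if $f(z) = \sum_{k=0}^\infty a_k z^k \in \mathcal{A}^2(\mathbb{D})$, then since the radius of convergence of the Taylor series is at least $1$, the function $f_\rho(z) = f(\rho z) = \sum_{k=0}^\infty a_k \rho^k z^k$ has Taylor coefficients $a_k \rho^k$. Applying Lemma~\ref{lem-norm} directly to $f_\rho$ gives
\[
\|f_\rho\|_2^2 = \sum_{k=0}^\infty \frac{|a_k|^2 \rho^{2k}}{k+1},
\]
which is \eqref{eq 1.3a}; this series is finite because it is dominated term-by-term by $\sum \frac{|a_k|^2}{k+1} = \|f\|_2^2 < \infty$ for $\rho \in (0,1)$, and this also shows $f_\rho \in \mathcal{A}^2(\mathbb{D})$.

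Next, for the monotonicity claim, I would study $\phi(\rho) := \|f_\rho\|_2^2 = \sum_{k=0}^\infty \frac{|a_k|^2}{k+1}\rho^{2k}$ as a power series in $\rho$ on $[0,1]$. At $\rho = 0$ only the $k=0$ term survives, so $\phi(0) = |a_0|^2 = |f(0)|^2$; and by Abel's theorem (monotone convergence of the partial sums, since all terms are nonnegative) $\phi(\rho) \to \sum_{k=0}^\infty \frac{|a_k|^2}{k+1} = \|f\|_2^2$ as $\rho \to 1^-$. The power series $\phi(\rho)$ can be differentiated term-by-term on $(0,1)$ (it converges uniformly on compact subsets of $(0,1)$, and so does the differentiated series), giving
\[
\frac{d}{d\rho}\big(\|f_\rho\|_2^2\big) = \phi'(\rho) = \sum_{k=1}^\infty \frac{2k\,|a_k|^2}{k+1}\,\rho^{2k-1},
\]
which is \eqref{eq 1.3b}. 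Since $f$ is not constant, some $a_k \neq 0$ with $k \geq 1$, so $\phi'(\rho) > 0$ for all $\rho \in (0,1)$; hence $\phi$ is strictly increasing on $(0,1)$, and therefore so is $\|f_\rho\|_2 = \sqrt{\phi(\rho)}$, running strictly from $|f(0)|$ up to $\|f\|_2$.

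The only point requiring a little care—and thus the main (minor) obstacle—is justifying the term-by-term differentiation and the limit $\rho \to 1^-$ simultaneously with legitimacy on the half-open interval $[0,1)$: one must not differentiate at the endpoint $\rho = 1$, and one needs that $\|f\|_2 < \infty$ is genuinely the supremum value rather than just a finite bound. Both are handled by the standard theory of power series with nonnegative coefficients (uniform convergence of $\phi$ and $\phi'$ on each $[0, r]$ with $r < 1$, together with Abel's limit theorem), so no new idea beyond Lemma~\ref{lem-norm} is needed. I would also remark that the strictness of the increase uses precisely the non-constancy hypothesis, since if $f$ were constant then $\phi' \equiv 0$.
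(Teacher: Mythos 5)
Your proposal is correct and follows the same route the paper intends: the paper's entire proof is the one-line remark that the claims ``follow from the analyticity of $f_{\rho}$ in $\overline{\mathbb{D}}$,'' and your argument is simply the fully written-out version of that, applying Lemma~\ref{lem-norm} to the coefficients $a_k\rho^k$ and then using standard facts about power series with nonnegative coefficients for the monotonicity, the limits at $\rho\to 0^+$ and $\rho\to 1^-$, and the term-by-term differentiation. No gaps; if anything, your version is more complete than the one in the paper.
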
%

\begin{proof}%
This follows from the analyticity of $f_{\rho}$ in $\overline{\mathbb{D}}$.
\end{proof}%

\begin{lemma}%
Let $f\in \mathcal{A}^p(\mathbb{D})$. Define $f_{\rho}(z)=f(\rho z)$ for $0<\rho<1$. Then $\|f_{\rho}\|_p$ converges to $\|f\|_p$ as $\rho \to 1$.
\end{lemma}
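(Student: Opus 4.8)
The plan is to reduce the statement to a change of variables followed by a monotone passage to the limit. First I would check that $f_\rho$ really belongs to $\mathcal{A}^p(\mathbb{D})$, so that $\|f_\rho\|_p$ makes sense: since $f$ is analytic on $\mathbb{D}$, the dilate $f_\rho(z)=f(\rho z)$ is analytic on the larger disk $\{|z|<1/\rho\}$, hence bounded on $\overline{\mathbb{D}}$, and so Lemma~\ref{lem-bounded} gives $f_\rho\in\mathcal{A}^p(\mathbb{D})$.

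Next I would rewrite the norm of the dilate as an integral over a subdisk. Substituting $w=\rho z$ in the defining integral yields
\begin{equation*}
\|f_\rho\|_p^p=\frac{1}{\pi}\int_{\mathbb{D}}|f(\rho z)|^p\,dA(z)=\frac{1}{\pi\rho^2}\int_{\mathbb{D}_\rho}|f(w)|^p\,dA(w),
\end{equation*}
where $\mathbb{D}_\rho=\{|w|<\rho\}$. The role of this identity is that the dependence on $\rho$ has been split into an elementary factor $\rho^{-2}$ and the truncated integral $\int_{\mathbb{D}_\rho}|f|^p\,dA$.

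Then I would let $\rho\to 1^-$. The map $\rho\mapsto\int_{\mathbb{D}_\rho}|f(w)|^p\,dA(w)$ is nondecreasing in $\rho$ and bounded above by $\pi\|f\|_p^p<\infty$, so by the monotone convergence theorem it increases to $\int_{\mathbb{D}}|f(w)|^p\,dA(w)=\pi\|f\|_p^p$ as $\rho\to 1$. Since $\rho^{-2}\to 1$ as well, the product converges, giving $\|f_\rho\|_p^p\to\|f\|_p^p$ and hence $\|f_\rho\|_p\to\|f\|_p$.

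There is no real obstacle here; the only point requiring a moment's care is that the substitution introduces the factor $\rho^{-2}$, which is harmless since it tends to $1$. If one prefers to avoid the monotone-convergence step, the displayed identity already gives the bound $\|f_\rho\|_p^p\le\rho^{-2}\|f\|_p^p$, hence $\limsup_{\rho\to1}\|f_\rho\|_p\le\|f\|_p$, while the pointwise convergence $f_\rho(z)\to f(z)$ on $\mathbb{D}$ combined with Fatou's lemma (applied along an arbitrary sequence $\rho_n\to1$) yields $\liminf_{\rho\to1}\|f_\rho\|_p\ge\|f\|_p$; putting the two together proves the claim. For $p=2$ one could instead invoke the series formula (\ref{eq 1.3a}) and pass to the limit termwise by monotone convergence on the coefficient sum, but the change-of-variables argument has the advantage of handling all $p$ simultaneously.
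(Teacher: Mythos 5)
Your proof is correct. The paper itself gives no argument for this lemma --- it is listed among the preliminary facts whose proofs are deferred to Duren and Schuster \cite{DS} --- so there is nothing internal to compare against; your change-of-variables identity $\|f_\rho\|_p^p=\rho^{-2}\cdot\frac{1}{\pi}\int_{\mathbb{D}_\rho}|f|^p\,dA$ followed by monotone convergence (or the $\limsup$/Fatou variant you sketch) is a complete and standard self-contained proof, and it correctly handles all $p$ at once rather than relying on the $p=2$ coefficient formula.
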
%


\section{Existence and properties of an extremal pair}
\setcounter{equation}{0}  %

In Problems  \ref{prob 1} and \ref{prob 2}, norms of the admissible functions are uniformly bounded by $1$. Therefore, by Montel's theorem, those functions form a normal family and hence we justify using the standard normal family arguments with careful technical details the existence of the extremal pair. We shall prove the existence result for Problem \ref{prob 1} in Proposition \ref{prop 1} and one can argue using similar arguments to prove the same for Problem \ref{prob 2}. 

\begin{proposition}\label{prop 1}%
An extremal pair of functions exists in Problem \ref{prob 1}.
\end{proposition}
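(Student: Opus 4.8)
The plan is to realize $F(c)$ as a limit along a minimizing sequence of pairs $(f_n,g_n)$ in $S^1$, extract convergent subsequences via Montel's theorem, and show that the limiting pair is admissible and attains the infimum. First I would fix a minimizing sequence $(f_n,g_n)$, so that $\sup_{c\le|z|<1}|f_n(z)/g_n(z)|\to F(c)$. Since $\|f_n\|_2=\|g_n\|_2=1$ for all $n$, Lemma~\ref{lem-norm} (applied to the Taylor coefficients) together with the standard pointwise estimate $|h(z)|\le \|h\|_2/(1-|z|^2)$ for $h\in\mathcal A^2(\mathbb D)$ shows that $\{f_n\}$ and $\{g_n\}$ are locally uniformly bounded on $\mathbb D$. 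By Montel's theorem, passing to a subsequence (twice), we may assume $f_n\to f_0$ and $g_n\to g_0$ locally uniformly on $\mathbb D$, with $f_0,g_0$ analytic.

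Next I would establish the required properties of the limit. By the locally uniform convergence and Fatou's lemma applied to the coefficient series in \eqref{eq 1.2}, or directly to the area integrals over $\mathbb D_\rho$ followed by $\rho\to1$, we get $\|f_0\|_2\le 1$ and $\|g_0\|_2\le 1$. The delicate point is that we need $f_0,g_0\in S^1$, i.e.\ the norms are exactly $1$, and that $g_0\not\equiv 0$ so that $f_0/g_0$ makes sense. For the quotient bound: on the compact annulus $\{c\le|z|\le r\}$ for any $r<1$, uniform convergence gives $|f_0(z)|\le F(c)|g_0(z)|$ pointwise wherever $g_0(z)\ne0$ — one argues that $|f_n|\le \big(F(c)+o(1)\big)|g_n|$ there and passes to the limit — and letting $r\to1$ this holds on all of $A(c,1)$. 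In particular $g_0$ cannot vanish identically (else $f_0$ would vanish on an annulus, hence $f_0\equiv0$, contradicting... ) — this is exactly where I need to rule out degeneration.

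The main obstacle is precisely this possible degeneration: a priori the masses $\|f_n\|_2^2$ and $\|g_n\|_2^2$ could escape to the boundary circle $|z|=1$, leaving $\|f_0\|_2<1$ or even $f_0\equiv0$ or $g_0\equiv0$, and then the limiting pair is not admissible for Problem~\ref{prob 1}. To handle this I would argue as follows. If $g_0\equiv 0$, then since $|f_n(z)|\le(F(c)+o(1))|g_n(z)|$ on $A(c,1)$ and $g_n\to0$ uniformly on compact annuli, we would get $f_0\equiv0$ on $A(c,1)$, hence $f_0\equiv0$ on $\mathbb D$; but one can construct an explicit admissible pair (e.g.\ built from $f(z)=a$, $g(z)=z$ suitably normalized, as in Hayman's example) showing $F(c)$ is attained by an honest quotient with value bounded away from $0$, so a limiting pair with $f_0\equiv0$ would force $F(c)=0$, contradicting $F(c)>0$. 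More robustly: if the norm is not attained, replace $(f_0,g_0)$ by its dilate — since $f_0/g_0$ is meromorphic and the inequality $|f_0|\le F(c)|g_0|$ on $A(c,1)$ is dilation-stable in the right direction, and since dilating \emph{increases} the norm toward $\|f_0\|_2$ by the monotonicity in \eqref{eq 1.3a}, one rescales to land back in $S^1$ without increasing the supremum of $|f_0/g_0|$ over $A(c,1)$. Finally, once $f_0,g_0\in S^1$ with $|f_0|\le F(c)|g_0|$ on $A(c,1)$, compactness of $\{c\le|z|\le r\}$ and the maximum principle for the subharmonic function $|f_0/g_0|$ (or a normal-families argument on the annulus) yield a point $z_0$ with $c<|z_0|<1$ where $|f_0(z_0)/g_0(z_0)|=\sup_{A(c,1)}|f_0/g_0|=F(c)$, giving the extremal triple $(f_0,g_0,z_0)$ and completing the proof.
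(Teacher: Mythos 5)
Your overall strategy coincides with the paper's: take a minimizing sequence in $S^1$, use Montel's theorem to extract locally uniformly convergent subsequences, and verify that the limit pair is extremal. You also correctly isolate the one genuinely delicate point, namely that locally uniform convergence only yields $\|f_0\|_2\le 1$ and $\|g_0\|_2\le 1$ (Fatou), so the limit pair may drop out of $S^1$ or degenerate to $0$. The problem is that neither of your two proposed repairs actually closes this gap. First, the deduction ``a limiting pair with $f_0\equiv 0$ would force $F(c)=0$'' is a non sequitur: $F(c)$ is an infimum over the whole admissible class, and a degenerate limit of one minimizing sequence carries no information about its value, so no contradiction with $F(c)>0$ arises. (Moreover, the positivity $F(c)\ge\sqrt{1-c^2}$ is proved in the paper only \emph{after} existence, via Theorem \ref{lem:prob2}, so invoking it here would be circular unless you supply an independent argument, e.g.\ the subharmonicity estimate $\int_{|z|<c}|f|^2\,dA\le c^2\|f\|_2^2$.) Second, the rescaling repair fails for a concrete reason: replacing $(f_0,g_0)$ by $(f_0/\|f_0\|_2,\,g_0/\|g_0\|_2)$ multiplies the quotient by the factor $\|g_0\|_2/\|f_0\|_2$, and nothing you have established prevents this factor from exceeding $1$; if it does, the supremum over $A(c,1)$ climbs above $F(c)$ and you cannot conclude extremality of the renormalized pair. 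What is needed is an argument that norm cannot be lost (or at least cannot be lost asymmetrically), for instance a dilation argument in the spirit of the paper's Lemma \ref{lem-triple}.

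Two smaller points. Your final step, producing $z_0$ with $c<|z_0|<1$ at which $|f_0/g_0|$ attains its supremum ``by the maximum principle,'' is backwards: the maximum principle \emph{forbids} an interior maximum of $|f_0/g_0|$ on $A(c,1)$ unless the quotient is constant, and the paper's Lemmas \ref{prop:2} and \ref{lem-triple} show the supremum is in fact attained on the circle $|z|=c$. This does not affect the existence of an extremal \emph{pair}, but the claim should be deleted or corrected. Finally, in fairness, the paper's own proof simply asserts that $\|f\|_2=\|g\|_2=1$ passes to the locally uniform limit, which is false in general (consider $f_n(z)=\sqrt{n+1}\,z^n$); so you have at least located the genuine difficulty that the published argument glosses over, even though your attempt does not resolve it.
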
%

\begin{proof}%
Suppose $f_n$, $g_n$ is a sequence of functions in $S^1$ and $z_n\in \mathbb{C}$ such that $\left | f_n(z_n)/g_n(z_n) \right| \to F(c)$ where $c\leq |z_n|<1$ and $z_n\to z_0$ in $c\leq |z|<1$. Since $S^1$ is compact, there exists subsequences $f_{n_k}$, $g_{n_k}$ of $f_n$, $g_n$ respectively in $S^1$ such that $f_{n_k}\to f$, $g_{n_k}\to g$ uniformly on compact subsets of $\mathbb{D}$. Relabel $f_{n_k}$, $g_{n_k}$ as $f_n$, $g_n$ respectively. Since $\|f_n\|_2=\|g_n\|_2=1$ for all $n$, then $\|f\|_2=\|g\|_2=1$. Therefore, $f,g\in S^1$.\\
Case $1$: If $f_n(z)/g_n(z)$  does not have any zeroes, then they have removable singularites at the $z_n$'s. Therefore, they have analytic extensions in $c\leq |z|<1$ and hence $\left| f_n(z_n)/g_n(z_n) \right| \to \left| f(z_0)/g(z_0)\right|$ as $n\to\infty$. By uniqueness of limits, $\left| f(z_0)/g(z_0) \right|=F(c)$.\\
Case $2$: If $f_n(z)/g_n(z)$ have zeroes and say $g_n$ has a zero $\zeta$ of order $m$ where $\zeta\in \overline{A(c,1)}$. Then $g_n(z)=(z-\zeta)^m h(z),\ h(z) \neq 0$. Therefore, $f_n(z)$ should be of the form $f_n(z)= (z-\zeta)^{m+k} h_1(z),\ h_1(z)\neq 0$, otherwise $\sup_{c\leq |z|<1} \left | f_n(z)/g_n(z) \right |=+\infty$. Note that, the infimum supremum cannot be $+\infty$ since it is always less than or equal to $1$. Therefore, $f_n$ has a zero of order at least $m$ at $\zeta$. Define 
$\widetilde{g}_n(z)=g_n(z)/(z-\zeta)^m$ and $\widetilde{f}_n(z)=f_n(z)/(z-\zeta)^m$. Clearly, $\widetilde{f}_n$, $\widetilde{g}_n$ have removable singularities at $z_0$, therefore they are analytic. By Hurwitz theorem, since $f_n$ and $g_n$ have a zero of order $m$, then $f$ and $g$ also has a zero of order $m$ at $\zeta$. Therefore, $\widetilde{g}_n(z) \to g(z)/(z-\zeta)^m$ and $\widetilde{f}_n(z) \to f(z)/(z-\zeta)^m$ where $0<|z-\zeta|<1$. Therefore, $\left| f_n(z_n)/g_n(z_n) \right| =\left | \widetilde{f}_n(z_n)/\widetilde{g}_n(z_n) \right| \to \left| f(z_0)/g(z_0) \right|$. By uniqueness of limits, $\left| f(z_0)/g(z_0) \right| =F(c)$.
\end{proof}%

It is important to note that if $(f,g)$ is an extremal pair for $F(c)$, then the zeros of $g(z)$ can either lie inside $|z|<c$ or $A(c,1)$ or $|z|>1$. If all the zeros of $g(z)$ lie in $|z|>1$, then the quotient $f(z)/g(z)$ is analytic in $A(c,1)$ and hence by the Maximum Modulus Theorem is analytic in $\mathbb{D}$. In this case, $F(c)=1$. If zeros lie in $A(c,1)$, then the zeros of $f(z)$ and $g(z)$ have to cancel each other to make the quotient analytic in $A(c,1)$. Moreover, Lemma \ref{lem-zero} deals with the case when zeros are in $|z|<c$. 

\begin{lemma}\label{lem-zero}%
If $(f,g)$ is an extremal pair for $F(c)<1$, then $g(z)$ has a zero in $|z|<c$.
\end{lemma}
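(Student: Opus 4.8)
The plan is to argue by contradiction: suppose $(f,g)$ is extremal for $F(c)<1$ but $g$ has no zero in the disk $|z|<c$. From the remarks preceding the lemma we already know that if every zero of $g$ lies in $|z|>1$ then $F(c)=1$, contradicting our hypothesis; and if $g$ has a zero in the annulus $A(c,1)$ then it must be cancelled by a zero of $f$ of at least the same order, so after dividing both $f$ and $g$ by the corresponding Blaschke-type factor we may assume $g$ has no zeros in $\overline{A(c,1)}$ either. Thus under the contradiction hypothesis $g$ is zero-free on all of $\overline{A(c,1)} \cup \{|z|<c\}$, i.e. on the closed disk $\overline{\mathbb{D}}$ minus possibly the set $|z|\ge 1$; in particular $g$ has no zeros in $\mathbb{D}$ at all. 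Then the quotient $h=f/g$ is analytic in $\mathbb{D}$, and by the Maximum Modulus Theorem $\sup_{c\le|z|<1}|h| = \sup_{\mathbb{D}}|h|$ is attained only in the limit toward $|z|=1$ (unless $h$ is constant), which already forces $F(c)=1$ once one checks that a zero-free $g$ with $\|g\|_2=1$ cannot do better than $f=g$. So the only way to have $F(c)<1$ with the extremal pair is for $g$ to vanish somewhere in $\mathbb{D}$, and the two remaining remarks show that vanishing in $A(c,1)$ alone is removable by the cancellation argument; hence $g$ must vanish in $|z|<c$.

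More carefully, the crux is to rule out the case ``$g$ zero-free in $\mathbb{D}$, $F(c)<1$.'' Here I would use the scaling/monotonicity lemma from Section~3: if $g$ is zero-free in $\mathbb{D}$ then $h=f/g$ is analytic in $\mathbb{D}$ with $\sup_{c\le|z|<1}|h(z)| = \|h\|_{H^\infty(\mathbb{D})} =: M$. If $M<1$ then $|f(z)| \le M|g(z)|$ pointwise on $\mathbb{D}$, so $\|f\|_2 \le M\|g\|_2 = M < 1$, contradicting $f\in S^1$. Hence $M\ge 1$, so $F(c) = \big|f(z_0)/g(z_0)\big| \ge $ ... — wait, this direction needs care, since $F(c)$ is an infimum over pairs and a supremum over $z$. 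The correct statement is: for \emph{this particular} extremal pair, $\sup_{c\le|z|<1}|f/g| = F(c) < 1$ by assumption, yet the argument just given shows that if $g$ is zero-free this supremum is $\|f/g\|_{H^\infty}\ge \|f\|_2/\|g\|_2 = 1$ (using $\|f\|_2\le \|h\|_\infty\|g\|_2$). This contradiction completes the case, and therefore $g$ must have a zero in $\mathbb{D}$; combined with the cancellation argument for zeros in $A(c,1)$, a zero in $|z|<c$ is forced.

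The key steps in order are: (1) reduce, via the preceding remarks, to showing the extremal $g$ has some zero in $\mathbb{D}$ and that zeros in $A(c,1)$ can be removed by cancelling common factors with $f$; (2) assume for contradiction $g$ is zero-free in $\mathbb{D}$, so $h=f/g$ is analytic there; (3) apply the Maximum Modulus Theorem to get $\sup_{c\le|z|<1}|h| = \|h\|_{H^\infty(\mathbb{D})}$; (4) deduce $\|f\|_2 \le \|h\|_{H^\infty}\|g\|_2$, so $\|h\|_{H^\infty}\ge 1$ since both norms equal $1$; (5) conclude $F(c) = \sup_{c\le|z|<1}|h| \ge 1$, contradicting $F(c)<1$.

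The main obstacle I expect is step~(4) together with the bookkeeping in step~(1): one must be careful that after cancelling common zeros in $\overline{A(c,1)}$ the resulting functions are still in $\mathcal{A}^2(\mathbb{D})$ with the same norms (dividing by a unimodular-on-the-boundary factor is fine, but the extremal $f,g$ are only assumed to have unit norm, and the cancellation must not change which value $|f/g|$ takes on $A(c,1)$ — it does not, since we divide numerator and denominator by the same factor). One should also handle the degenerate possibility $g(0)=0$ versus $g$ having all zeros of $f/g$ exactly matched; but these are exactly the scenarios the statement is trying to isolate, so the logic closes.
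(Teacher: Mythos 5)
Your proposal is correct and follows essentially the same route as the paper's proof (which attributes the idea to Hinkkanen): if $g$ has no zero in $|z|<c$ then $f/g$ extends analytically to $\mathbb{D}$, the Maximum Modulus Theorem bounds $|f/g|$ on all of $\mathbb{D}$ by its supremum $F(c)$ over the annulus, and integrating $|f|\leq F(c)|g|$ gives $\|f\|_2\leq F(c)\|g\|_2<1$, contradicting $f\in S^1$. The preliminary Blaschke-factor cancellation in your step (1) is harmless but unnecessary, since you only ever use the unmodified quotient $f/g$ and the original norms.
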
%

\begin{proof}%
Following Hinkkanen, consider the function $\omega(z)=f(z)/g(z)$. If there is no zero in $|z|<c$, then $\omega(z)$ is analytic in $\mathbb{D}$ by analytic continuation. Therefore, $|\omega(z)|\leq F(c)$ for all $z$ in $\mathbb{D}$. This implies $|f(z)|\leq F(c) |g(z)|$ for all $z$ in $\mathbb{D}$. Then by Maximum Modulus Theorem, $f(z)=e^{i\beta}g(z)F(c)$. But $\|f\|_2=\|g\|_2F(c)<1$, contradicting the fact that $f, g \in S^1$.
\end{proof}%

\begin{lemma}\label{prop:2}%
Let $(f, g, z_0)$ be an extremal triple for $F(c)$. If $F(c)<1$, then $z_0\in \partial A(c,1)$ that is, either $|z_0|=c$ or $|z_0|=1$.
\end{lemma}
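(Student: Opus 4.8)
The plan is to argue by contradiction using the Maximum Modulus Principle on the annulus $A(c,1)$. Suppose $(f,g,z_0)$ is an extremal triple with $F(c)<1$, but assume to the contrary that $z_0$ lies in the \emph{open} annulus, i.e. $c<|z_0|<1$. Put $\omega(z)=f(z)/g(z)$. First I would check that $\omega$ is genuinely analytic on all of $A(c,1)$: since $(f,g)$ is an extremal pair we have $\sup_{c\le|z|<1}|\omega(z)|=F(c)\le 1<\infty$, so at any zero $\zeta$ of $g$ in $A(c,1)$ the numerator $f$ must vanish to at least the same order (otherwise the supremum would be $+\infty$), and hence $\omega$ has a removable singularity at $\zeta$. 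Thus $\omega$ extends to an analytic function on the domain $A(c,1)$, bounded there by $F(c)$.

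Next, since $A(c,1)\subseteq\{z:\ c\le|z|<1\}$ and $|\omega(z_0)|=F(c)=\sup_{c\le|z|<1}|\omega|$, the modulus $|\omega|$ attains its maximum over the domain $A(c,1)$ at the interior point $z_0$. By the Maximum Modulus Principle $\omega$ is therefore constant on $A(c,1)$, say $\omega\equiv\lambda$ with $|\lambda|=F(c)<1$. Then $f-\lambda g$ is analytic on $\mathbb{D}$ and vanishes identically on the open set $A(c,1)$, so by the identity theorem $f\equiv\lambda g$ on $\mathbb{D}$. Consequently $\|f\|_2=|\lambda|\,\|g\|_2=|\lambda|<1$, contradicting $f\in S^1$. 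Hence $z_0$ cannot lie in the open annulus, so $|z_0|=c$ or $|z_0|=1$, i.e. $z_0\in\partial A(c,1)$.

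I expect the only delicate point to be the first step: confirming that $\omega$ extends analytically across the zeros of $g$ inside $A(c,1)$ (this is exactly where finiteness of $F(c)$ is used, and is essentially the same reasoning as in Case $2$ of the proof of Proposition \ref{prop 1}), together with the short bookkeeping needed to pass from ``$\omega$ constant on $A(c,1)$'' to ``$f\equiv\lambda g$ on $\mathbb{D}$''. The hypothesis $F(c)<1$ enters precisely to rule out the constant alternative $|\lambda|=F(c)$ for a pair lying in $S^1$; if one only knew $F(c)\le 1$ the argument would collapse, consistent with $F(c)=1$ on $(0,\kappa]$. I would also remark that, in view of the construction of the extremal triple in Proposition \ref{prop 1}, the limit point already satisfies $c\le|z_0|<1$, so the effective conclusion is $|z_0|=c$; I state it in the two-circle boundary form above only to keep symmetry with the outer circle $|z|=1$.
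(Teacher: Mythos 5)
Your proof is correct and follows essentially the same route as the paper: assume the extremum is attained at an interior point of the annulus, note that $\omega=f/g$ is analytic there because the finite supremum forces removable singularities at the zeros of $g$, invoke the Maximum Modulus Principle to conclude $\omega$ is a constant of modulus $F(c)<1$, and derive the contradiction $\|f\|_2=F(c)\|g\|_2<1$ with $f\in S^1$. You actually spell out two details the paper glosses over (the removability of singularities and the identity-theorem step extending $f\equiv\lambda g$ from the annulus to all of $\mathbb{D}$); only your closing aside is slightly off, since the limit point of a sequence with $c\le|z_n|<1$ may well have $|z_0|=1$, which is why the paper needs the separate Lemma \ref{lem-triple} to pin the extremum to $|z|=c$.
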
%

\begin{proof}%
Let $(f, g)$ be an extremal pair, that is 
\begin{equation}\label{eq 2.2}%
\sup \limits_{c\leq |z|<1} \left| \frac{f(z)}{g(z)}\right |=F(c) 
\end{equation}%
then there is a sequence $z_n$ such that $z_n \to z_0$ such that
\begin{eqnarray*}%
\left| \frac{f(z_n)}{g(z_n)} \right| \to F(c)
\end{eqnarray*}%
as $n\to\infty$. (\ref{eq 2.2}) implies that $\left | f(z_n)/g(z_n) \right| \leq F(c)$ for all  $z_n$ such that  
$c\leq |z_n|<1$. This further implies $f/g$ is bounded, hence has no poles and therefore analytic. Suppose if possible, $\sup \left | f(z_n)/g(z_n) \right|=F(c)$ for some $z_0$ such that $c<|z_0|<1$. Then by Maximum Modulus Theorem,
\begin{eqnarray*}%
\frac{f(z_n)}{g(z_n)}=F(c) e^{i\beta} \Rightarrow f(z_n) =F(c) e^{i\beta} g(z_n).
\end{eqnarray*}%
But $\|f\|_2=F(c) \|g\|_2<1$, which is a contradiction to the fact that $f\in S^1$. So, this means $z_0 \in \partial A(c,1)$, either at $|z_0|=c$ or $|z_0|=1$. 
\end{proof}%

In fact, Lemma \ref{lem-triple} shows that the extremal is attained on the circle $|z|=c$.

\begin{lemma}\label{lem-triple}%
If $(f,g)$ is an extremal pair for $F(c)<1$, then there is a $z_0$ with $|z_0|=c$ such that $\left | f(z_0)/g(z_0) \right|=F(c)$.
\end{lemma}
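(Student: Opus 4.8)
The plan is to combine the two previous lemmas. By Lemma \ref{prop:2} we already know that if $(f,g,z_0)$ is an extremal triple with $F(c)<1$, then $z_0\in\partial A(c,1)$, so $|z_0|=c$ or $|z_0|=1$. The remaining task is therefore to rule out the possibility that the extremal is attained \emph{only} on the outer circle $|z|=1$ and never on $|z|=c$. So I would argue by contradiction: suppose that for every boundary point $z_0$ at which $|f(z_0)/g(z_0)|=F(c)$ we have $|z_0|=1$, i.e. the supremum in \eqref{eq 2.2} is attained nowhere on $|z|=c$ (and nowhere in the open annulus, by Lemma \ref{prop:2}).

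First I would set $\omega(z)=f(z)/g(z)$. By Lemma \ref{lem-zero} the zeros of $g$ responsible for $F(c)<1$ lie in $|z|<c$, and (as observed in the text before Lemma \ref{lem-zero}) in $A(c,1)$ any zero of $g$ is cancelled by a zero of $f$; hence $\omega$ is analytic in the closed annulus $\overline{A(c,1)}$ and $|\omega|\le F(c)$ there. Under the contradiction hypothesis, $|\omega(z)|<F(c)$ on the whole circle $|z|=c$ (strict inequality, since the sup is not attained there and $\{|z|=c\}$ is compact so in fact $\sup_{|z|=c}|\omega|<F(c)$). The idea now is to perturb $g$ on the inner disk $|z|<c$ only, shrinking it slightly so as to push up $|\omega|$ on $|z|=c$ while keeping $|f|\le|g|$ on all of $A(c,1)$ and keeping $\|g\|_2\le 1$ (renormalizing if needed), thereby producing an admissible pair with a strictly smaller value of $\sup|f/g|$ — contradicting the minimality of $F(c)$. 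Concretely, since $g$ has a zero in $|z|<c$, one can replace $g$ by $g_\lambda(z)=g(\lambda z)$ or more carefully by a function agreeing with $g$ near $|z|=1$ but with the inner zero moved toward $|z|=c$; the point is that $\|g\|_2$ can be kept $\le 1$ while the values of $|g|$ on $|z|=c$ are made smaller, hence $|\omega|=|f|/|g|$ on $|z|=c$ strictly increases but stays below $F(c)$ by continuity for a small enough perturbation, while on the rest of the annulus the gain is controlled. Then $\sup_{c\le|z|<1}|f/g_\lambda|$, by the maximum principle applied to the still-analytic quotient, equals $\max(\sup_{|z|=c},\sup_{|z|\to 1})$ of the perturbed quotient; a careful choice makes this strictly less than $F(c)$.

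An alternative, cleaner route — and the one I would actually try to push through — avoids perturbation altogether and instead uses the behavior of the dilates $f_\rho, g_\rho$ together with Lemma stated in the preliminaries (strict monotonicity of $\|f_\rho\|_2$). Since $\omega=f/g$ is analytic on $\overline{A(c,1)}$ with $|\omega|\le F(c)<1$, and since (by the contradiction hypothesis) $|\omega|<F(c)$ on $|z|=c$, the maximum of $|\omega|$ over $\overline{A(c,1)}$ is attained on $|z|=1$, so $|f|\le F(c)|g|\le F(c)$ on $|z|=1$ and $|f|\le|g|$ throughout. Integrating $|f|^2\le F(c)^2|g|^2$ over circles near $|z|=1$ and using the monotone-limit lemma, one gets $\|f\|_2^2 = \lim_{\rho\to 1}\|f_\rho\|_2^2$, and by pushing the pointwise bound inward via the maximum principle for $|f|^2 - F(c)^2|g|^2$ (which is \emph{not} subharmonic, so this needs care) one should reach $\|f\|_2 \le F(c)\|g\|_2 < 1$, contradicting $f\in S^1$ exactly as in the proofs of Lemmas \ref{lem-zero} and \ref{prop:2}. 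The main obstacle is precisely this last step: $|f|^2-F(c)^2|g|^2$ is a difference of subharmonic functions and need not obey a maximum principle, so one cannot simply propagate the boundary inequality on $|z|=1$ inward to $|z|<c$. I expect the honest argument to require the perturbation of the inner zeros of $g$ described in the previous paragraph, and the delicate point there will be to quantify the trade-off — ensuring that moving the zero of $g$ outward decreases $\sup_{|z|=c}|f/g|$'s \emph{complement} enough without creating a new, larger supremum elsewhere in $A(c,1)$ and without violating $\|g\|_2\le 1$; handling that trade-off uniformly is where the real work lies.
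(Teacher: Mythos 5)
You correctly reduce the problem, via Lemma \ref{prop:2}, to ruling out the case $\sup_{|z|=c}|f(z)/g(z)|<F(c)$, but neither of your two routes closes the argument, and you say as much yourself. The first route has two genuine defects. An analytic function cannot be modified ``on the inner disk $|z|<c$ only'' while ``agreeing with $g$ near $|z|=1$'': by the identity theorem any such modification changes $g$ everywhere, so the trade-off you worry about is not merely delicate but unavoidable and left unquantified. Moreover, the perturbation as described pushes $|f/g|$ \emph{up} on $|z|=c$, whereas the contradiction you need is an admissible pair whose supremum over the whole annulus is strictly \emph{below} $F(c)$; no mechanism is supplied by which a larger quotient on the inner circle yields a smaller global supremum. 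Your second route you correctly diagnose as failing: $|f|^2-F(c)^2|g|^2$ obeys no maximum principle, so the boundary inequality cannot be propagated inward and nothing about $\|f\|_2$ follows.

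The idea you are missing is the dilation-and-renormalization competitor, which is the entire content of the paper's proof. Assume $\sup_{|z|=c}|f(z)/g(z)|<F(c)$. Then $|f(w)|<F(c)|g(w)|$ for all $c\le |w|<1$ (strict on $|w|=c$ by hypothesis and compactness, strict in the open annulus by the maximum-modulus argument already used in Lemma \ref{prop:2}), and by continuity this strict bound persists on the slightly larger annulus $rc\le|w|<1$ for $r<1$ close to $1$. Set $\widetilde{f}_r(z)=f(rz)$, $\widetilde{g}_r(z)=g(rz)$ and $f_r=\widetilde{f}_r/\|\widetilde{f}_r\|_2$, $g_r=\widetilde{g}_r/\|\widetilde{g}_r\|_2$, so that $(f_r,g_r)\in S^1\times S^1$ is admissible. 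The identity $\|\widetilde{f}_r\|_2^2=r^{-2}\bigl(1-\int_{r\le|z|<1}|f|^2\,dA\bigr)$ together with $|f|<|g|$ on the outer annulus $r\le|z|<1$ gives $\|\widetilde{g}_r\|_2<\|\widetilde{f}_r\|_2$, hence
\[
\left|\frac{f_r(z)}{g_r(z)}\right|=\frac{\|\widetilde{g}_r\|_2}{\|\widetilde{f}_r\|_2}\,\left|\frac{f(rz)}{g(rz)}\right|<F(c)
\]
for all $c\le|z|<1$, contradicting the definition of $F(c)$ as an infimum. This argument never touches the zeros of $g$ in $|z|<c$ and requires no perturbation theory; the renormalization does the work your sketch hoped the zero-moving would do.
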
%

\begin{proof}%
Suppose that
\begin{equation}\label{lem-eqtriple-1.1}%
\sup \limits_{|z|=c} \left| \frac{f(z)}{g(z)}\right|<F(c)
\end{equation}%
By continuity, $\left| f(rz)/g(rz)\right|<F(c)$ or 
\begin{equation}\label{lem-eqtriple-1.2}%
|f(rz)|<F(c) |g(rz)|
\end{equation}%
for all $c\leq |z|<1$ and all $r<1$ sufficiently close to $1$. We have,
\begin{align*}%
1 = \int \limits_{|z|<1} |f(z)|^2\; dA(z)&= \int \limits_{|z|< r} |f(z)|^2\; dA(z)+\int \limits_{r\leq |z|<1} |f(z)|^2\; dA(z)\\
&=r^2 \int \limits_{|z|<1} |f(rz)|^2\; dA(z)+\int \limits_{r\leq|z|<1} |f(z)|^2\; dA(z)
\end{align*}%
Let $\widetilde{f}_r(z)=f(rz)$, $\widetilde{g}_r(z)=g(rz)$. Then, we have 
\begin{eqnarray*}%
\|\widetilde{f}_r\|_2^2 =r^{-2} \left( 1-\int \limits_{r\leq |z|<1} |f(z)|^2\; dA (z)\right),\quad \|\widetilde{g}_r\|_2^2 =r^{-2} \left( 1-\int \limits_{r\leq |z|<1} |g(z)|^2\; dA(z) \right).
\end{eqnarray*}
Consider functions
\begin{equation}\label{lem-eqtriple-1.4}%
f_r(z)=\frac{\widetilde{f}_r(z)}{\|\widetilde{f}_r\|_2},\quad g_r(z)=\frac{\widetilde{g}_r(z)}{\|\widetilde{g}_r\|_2}
\end{equation}%
Then $(f_r,g_r)$ is an admissible pair for $F(c)$ and for all $z$ such that $c\leq |z|<1$,
\begin{align*}%
\left | \frac{f_r(z)}{g_r(z)} \right| =\frac{\|\widetilde{g}_r\|_2}{\|\widetilde{f}_r\|_2} \left|\frac{\widetilde{f}_r(z)}{\widetilde{g}_r(z)} \right|
= \sqrt{\frac{1-\int \limits_{r\leq |z|<1} |g(z)|^2\; dA(z)}{1-\int \limits_{r\leq |z|<1} |f(z)|^2\; dA(z)}}\, \left| \frac{f(rz)}{g(rz)} \right|
< \left| \frac{f(rz)}{g(rz)} \right| <F(c).
\end{align*}%
The latter inequalities follows from (\ref{lem-eqtriple-1.2}). Now, $\sup_{c\leq |z|<1} \left| f_r(z)/g_r(z) \right|<F(c)$ contradicting the definition of $F(c)$. Thus our assumption (\ref{lem-eqtriple-1.1}) was wrong and the result follows.
\end{proof}%

 If $f$ and $g$ are polynomials of degree at most $n\geq 1$, then functions $f_r$ and $g_r$ defined by (\ref{lem-eqtriple-1.4}) are also polynomials of degree at most $n$. Therefore, our proof of Lemma \ref{lem-triple} gives the following:
\begin{corollary}%
If $(p,q)$ is an extremal pair of polynomials for $F_n(c)<1$, then there is $z_0$ with $|z_0|=c$ such that $ \left| p(z_0)/q(z_0) \right|=F_n(c)$.
\end{corollary}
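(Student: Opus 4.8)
The plan is to run the proof of Lemma~\ref{lem-triple} essentially verbatim, with $f,g$ replaced by the extremal polynomials $p,q$ and with $F(c)$ replaced by the polynomial quantity $F_n(c)$ (the infimum over pairs of polynomials of degree at most $n$ with unit $\mathcal{A}^2$ norm), checking only that every construction used in that proof keeps us inside the class of such polynomial pairs. First I would argue for contradiction: assume $\sup_{|z|=c}|p(z)/q(z)|<F_n(c)$. Before exploiting this I would record that the assumption is meaningful, i.e. that the supremum on the compact circle $|z|=c$ is actually attained: since $(p,q)$ is extremal for $F_n(c)<1$, the rational function $p/q$ is bounded on $A(c,1)$, so $q$ has no pole of $p/q$ on $\overline{A(c,1)}$ (a boundary pole would force $|p/q|\to\infty$ from inside the annulus), whence $p/q$ is continuous on $\overline{A(c,1)}$.

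Next, exactly as in the proof of Lemma~\ref{lem-triple}, continuity gives $|p(rz)|<F_n(c)\,|q(rz)|$ for all $c\le|z|<1$ and all $r<1$ close enough to $1$, which is the analogue of (\ref{lem-eqtriple-1.2}). I would then set $\widetilde p_r(z)=p(rz)$, $\widetilde q_r(z)=q(rz)$ and pass to the normalized functions $p_r,q_r$ defined as in (\ref{lem-eqtriple-1.4}). The one genuinely new point—and the place to be slightly careful—is the degree bookkeeping: the dilation $z\mapsto rz$ sends a polynomial of degree at most $n$ to a polynomial of the same degree, and dividing by the positive constants $\|\widetilde p_r\|_2$ and $\|\widetilde q_r\|_2$ does not change the degree, so $p_r$ and $q_r$ are again polynomials of degree at most $n$ with unit norm. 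Hence $(p_r,q_r)$ is an \emph{admissible} pair for $F_n(c)$, not merely for $F(c)$.

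Finally I would reproduce the chain of inequalities from the proof of Lemma~\ref{lem-triple} to obtain $\sup_{c\le|z|<1}|p_r(z)/q_r(z)|<F_n(c)$, which contradicts the definition of $F_n(c)$ as an infimum over admissible polynomial pairs. Therefore the assumption fails, so $\sup_{|z|=c}|p(z)/q(z)|=F_n(c)$, and by the continuity of $p/q$ on the compact circle $|z|=c$ noted above this value is attained at some $z_0$ with $|z_0|=c$; combined with Lemma~\ref{prop:2}, this also shows the extremal value can never be forced onto $|z|=1$ for polynomial pairs. I do not expect a real obstacle here: the proof is a transcription of Lemma~\ref{lem-triple}, and the only substantive checks—that $p_r,q_r$ stay polynomials of degree $\le n$, and that the relevant suprema are attained—are both immediate.
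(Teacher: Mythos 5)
Your proposal is correct and is exactly the paper's argument: the authors prove the corollary by observing that the dilation and normalization in (\ref{lem-eqtriple-1.4}) send polynomials of degree at most $n$ to polynomials of degree at most $n$, so the proof of Lemma \ref{lem-triple} carries over verbatim to $F_n(c)$. Your additional remarks on the attainment of the supremum on the compact circle $|z|=c$ are a reasonable (and harmless) elaboration of a point the paper leaves implicit.
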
%


\section{Properties of Korenblum's function}%
\setcounter{equation}{0}  %

As discussed in Section $3$, it is straightforward to observe that $F(c)$ is non-increasing in the interval $(0,1)$. Since ($\kappa,1$) is the non-trivial range of $F(c)$, we are further interested in the following.

\begin{lemma}\label{lem-mon}%
Let $F:(0,1)\to(0,1)$ and $F_B:(0,1)\to(0,1)$ be as defined before.
\begin{enumerate}
\item[{\rm (1).}] $F(c)$ is a strictly decreasing function in $(\kappa,1)$.
\item[{\rm (2).}] If $(f, g)$ is an extremal pair for $F_B(c_0)$ and $c_0\in (\kappa,1)$, then 
\begin{eqnarray*}
\max \limits_{|z|=c_0} \left | \frac{f(z)}{g(z)} \right|=1.
\end{eqnarray*} 
\item[{\rm (3).}] $F_B(c)$ is a strictly increasing function in $(\kappa,1)$.
\end{enumerate}
\end{lemma}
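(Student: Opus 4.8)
I'd prove (1) by a maximum-modulus argument, and then deduce (2) and (3) from the identity $F_B(c)=1-F(c)^2$ on $(\kappa,1)$, which I would establish along the way using only Lemma \ref{lem-triple} and two elementary rescaling perturbations.

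For (1): fix $\kappa<c_1<c_2<1$ and suppose toward a contradiction that $F(c_1)=F(c_2)=:\mu$ (necessarily $\mu<1$). Take an extremal pair $(f,g)$ for $F(c_1)$, which exists by Proposition \ref{prop 1}. Since $\sup_{c_1\le|z|<1}|f/g|=\mu<\infty$, the quotient $f/g$ has no poles in $A(c_1,1)$, hence is analytic there with $|f/g|\le\mu$. The same pair is admissible for $F(c_2)$, and $F(c_2)\le\sup_{c_2\le|z|<1}|f/g|\le\sup_{c_1\le|z|<1}|f/g|=\mu=F(c_2)$, so it is in fact extremal for $F(c_2)$ with supremum $\mu$; Lemma \ref{lem-triple} then supplies $z_0$ with $|z_0|=c_2$ and $|f(z_0)/g(z_0)|=\mu$. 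But $z_0$ lies in the \emph{open} annulus $c_1<|z|<1$, where $f/g$ is analytic with modulus $\le\mu$, so it attains its maximum modulus at an interior point; the maximum modulus principle forces $f\equiv\mu e^{i\beta}g$ on $\mathbb{D}$, whence $\|f\|_2=\mu\|g\|_2=\mu<1$, contradicting $f\in S^1$.

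Next I would establish $F_B(c)=1-F(c)^2$ for $c\in(\kappa,1)$. Let $(f,g)$ be an extremal pair for $F_B(c)$, which exists by the analogue of Proposition \ref{prop 1} for Problem \ref{prob 2}. Since $F_B(c)>0$ we have $\|f\|_2>\|g\|_2$; if $\|f\|_2<1$, the rescaled pair $\bigl(\|f\|_2^{-1}f,\ \|f\|_2^{-1}g\bigr)$ is still admissible (its $g$-norm equals $\|g\|_2/\|f\|_2<1$ and the pointwise inequality is unchanged) and strictly increases $\|f\|_2^2-\|g\|_2^2$, which is impossible; hence $\|f\|_2=1$ and $0<\|g\|_2<1$. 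Likewise, $|f|\le|g|$ on $\{c\le|z|<1\}$ makes $f/g$ analytic in $A(c,1)$ with $|f/g|\le1$; if $\sup_{c\le|z|<1}|f/g|=\mu<1$, then $(f,\mu'g)$ with $\mu<\mu'<1$ is admissible and strictly increases the functional, again impossible, so $\sup_{c\le|z|<1}|f/g|=1$. Now $\hat g:=\|g\|_2^{-1}g\in S^1$, the pair $(f,\hat g)$ is admissible for $F(c)$ with $\sup_{c\le|z|<1}|f/\hat g|=\|g\|_2$, so $F(c)\le\|g\|_2$. Conversely, an extremal pair $(p,q)$ for $F(c)$ has $|p|\le F(c)|q|$ on $\{c\le|z|<1\}$, so $(p,\lambda q)$ is admissible for $F_B(c)$ for every $\lambda\in(F(c),1]$, giving $F_B(c)\ge1-\lambda^2$; letting $\lambda\downarrow F(c)$ yields $\|g\|_2^2=1-F_B(c)\le F(c)^2$. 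Hence $\|g\|_2=F(c)$, $F_B(c)=1-F(c)^2$, and $(f,\hat g)$ is an extremal pair for $F(c)$.

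Part (3) is now immediate: by (1), $F$ is strictly decreasing on $(\kappa,1)$, so $F_B=1-F^2$ is strictly increasing there. For (2), take $(f,g)$ extremal for $F_B(c_0)$ with $c_0\in(\kappa,1)$; the previous paragraph gives $\|g\|_2=F(c_0)$ and that $(f,\hat g)$, $\hat g=\|g\|_2^{-1}g$, is an extremal pair for $F(c_0)$. Since $F(c_0)<1$, Lemma \ref{lem-triple} yields $z_0$ with $|z_0|=c_0$ and $|f(z_0)/\hat g(z_0)|=F(c_0)$, hence $|f(z_0)/g(z_0)|=F(c_0)/\|g\|_2=1$; combined with $|f/g|\le1$ on the whole circle $|z|=c_0$, this gives $\max_{|z|=c_0}|f/g|=1$. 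The genuinely substantive step is the identity, and within it the two structural facts that an $F_B(c)$-extremal pair must satisfy $\|f\|_2=1$ and $\sup_{c\le|z|<1}|f/g|=1$; these come from the two rescalings above, and everything else is bookkeeping — in particular one should check that the relevant quotients are analytic on their annuli (zeros of $g$ in $A(c,1)$ being cancelled by zeros of $f$) so that the stated maxima make sense. I do not expect any real obstacle once Lemma \ref{lem-triple} is available.
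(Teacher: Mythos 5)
Your argument is correct, but it reaches parts (2) and (3) by a genuinely different route than the paper. For part (1) you and the paper do essentially the same thing: assume $F(c_1)=F(c_2)$, use Lemma \ref{lem-triple} to force the supremum of $|f/g|$ to be attained on $|z|=c_2$, which is interior to $A(c_1,1)$, and invoke the maximum modulus principle to get $f=\mu e^{i\beta}g$ and the norm contradiction. For (2) and (3), however, the paper argues directly: it proves (2) by a dilation perturbation $f_\rho(z)=f(\rho z)$, $g_\rho(z)=g(\rho z)$ (if $\max_{|z|=c_0}|f/g|<1$ the dilated pair is still admissible and the change of variables gives $F_B(c_0)\le\rho^2(\|f_\rho\|_2^2-\|g_\rho\|_2^2)<F_B(c_0)$), and then deduces (3) from (2) via the maximum modulus principle. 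You instead establish the identity $F_B(c)=1-F(c)^2$ on $(\kappa,1)$ by two normalization perturbations (rescaling $f$ to unit norm, and rescaling $g$ by $\mu'$ to saturate the pointwise constraint), and then read off both (2) and (3) from the identity together with part (1) and Lemma \ref{lem-triple}. Your normalization steps are sound (the nondegeneracy $\|g\|_2>0$ and $\|f\|_2>\|g\|_2$ follow from $F_B(c)>0$, and the matched-zeros point you flag is exactly what makes $\sup|f/g|$ finite and attained on the circle). What your route buys is strictly more than the lemma: the identity $F_B=1-F^2$ is a structural link between Problems \ref{prob 1} and \ref{prob 2} that the paper only proves in one direction (the inequality $1-F^2(c)\le F_B(c)$ appears implicitly in the proof of Theorem \ref{lem:prob2}(2)), and it shows the two problems share extremal pairs up to normalization, so continuity and monotonicity transfer automatically between $F$ and $F_B$. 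What the paper's dilation argument buys is independence from the existence theory for Problem \ref{prob 1} extremals and from Lemma \ref{lem-triple} in the proof of (2); your proof of (2) leans on both.
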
%

\begin{proof}%
(1). Suppose that $\kappa<c_1<c_2<1$ and $0<F(c_2)=F(c_1)<1$. Let $(f,g)$ be an extremal pair for $F(c)$. Consider $\omega(z)=f(z)/g(z)$. Then $\omega(z)$ is analytic in $A(c,1)$. Furthermore, $|\omega(z)|$ does not take its maximal value in $A(c,1)$ otherwise $\omega(z)$ must be constant by the Maximum Modulus Theorem. Therefore, $f(z)=F(c)e^{i\beta}g(z)$ with some $\beta \in \mathbb{R}$. The latter contradicts the assumption that $f, g \in S^1$ since $\|f\|_2=F(c)\|g\|_2<1$. In the case, 
$ \sup_{c_2\leq |z|<1} \left | f(z)/g(z) \right| <F(c_1)=F(c_2)$, we have a contradiction with the definition of $F(c_2)$. In the case, $\sup_{c_2\leq |z|<1} \left| f(z)/g(z) \right | =F(c_1)=F(c_2)$. Since
$(f,g)$ is extremal for $F(c_2)$ and $|\omega(z)|<F(c_2)$ for $|z|=c_2$, we obtain a contradiction with Lemma \ref{prop:2}, because $\sup_{0\leq \theta \leq 2\pi} \left| f(c_2 e^{i\theta})/g(c_2 e^{i\theta}) \right |< F(c_1)=F(c_2)$ and $(f,g)$ is an extremal pair for $F(c_2)$.\\

\noindent
(2). Suppose  $\max_{|z|=c_0} \left |f(z)/g(z) \right|<1$. For $0<\rho<1$, consider the functions $f_{\rho}(z)=f(\rho z)$ and $g_{\rho}(z)=g(\rho z)$. We extend the inequality to a larger domain such that
$\left|f_{\rho}(z)\right| \leq \left| g_{\rho}(z)\right|$ for all $z$ such that $c_0\leq |z|<1$.
Therefore, $f_{\rho},\ g_{\rho} \in FG(c_0)$. Also, we have
\begin{align*}
F(c_0) = \|f\|_2^2-\|g\|_2^2 &=\int \limits_{\mathbb{D}} |f(z)|^2\; dA(z)-\int \limits_{\mathbb{D}} |g(z)|^2\; dA(z) \\
&\leq \int \limits_{\mathbb{D}_{\rho}} |f(z)|^2\; dA(z)-\int \limits_{\mathbb{D}_{\rho}} |g(z)|^2\; dA(z) 
\end{align*}
Let $z=\rho \zeta$. Then
\begin{align*}
F(c_0) &\leq \int \limits_{\mathbb{D}} |f_{\rho}(\zeta)|^2 \rho^2\; dA(\zeta)-\int \limits_{\mathbb{D}} |g_{\rho}(\zeta)|^2 \rho^2\; dA(\zeta)= \rho^2(\|f_{\rho}\|_2^2-\|g_{\rho}\|_2^2)\\
&< \|f_{\rho}\|_2^2-\|g_{\rho}\|_2^2 \leq F(c_0)
\end{align*}
since $f$, $g \in F(c_0)$. Therefore, we get $F(c_0)<F(c_0)$, which is a contradiction.\\

\noindent
(3). Suppose $F(c_1)=F(c_2)$. Let $(f,g)$ be the pair of extremal functions for $F(c_1)$. Then it is also an extremal pair for $F(c_2)$. ($2$) implies $\max_{|z|=c_2} \left |f(z)/g(z) \right|$ is $1$. 
Then by the Maximum Modulus Theorem applied to $f(z)/g(z)$ in $c_1\leq |z|<1$, we have 
$\left| f(z)/g(z)\right|=1$ for all $z$ such that $c_1\leq |z| <1$ and therefore on $\mathbb{D}$ by analytic continuation. Hence $\|f\|_2^2-\|g\|_2^2=0$, which is a contradiction to the fact that $c_1,c_2\in (\kappa,1)$.
\end{proof}%

\begin{lemma}\label{lem-cont}%
Both $F_B(c)$ and $F(c)$ are continuous functions from $(0,1)$ to $(0,1)$. 
\end{lemma}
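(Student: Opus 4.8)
The plan is to deduce both statements from the single identity
\[
F_B(c) \;=\; 1 - F(c)^2, \qquad 0<c<1,
\]
which collapses the two problems into one: once $F$ is shown to be continuous, so is $F_B=1-F^2$. I would establish continuity of $F$ by proving left‑continuity directly and right‑continuity through $F_B$, using the relation $F(c)=\sqrt{1-F_B(c)}$.

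To prove the identity, first take an extremal pair $(f_0,g_0)$ for $F(c)$, which exists by Proposition \ref{prop 1}. Since $|f_0|\le F(c)\,|g_0|$ on $A(c,1)$ and $\|F(c)\,g_0\|_2=F(c)\le1$, the pair $(f_0,\,F(c)\,g_0)$ lies in $FG(c)$, so $F_B(c)\ge\|f_0\|_2^2-F(c)^2\|g_0\|_2^2=1-F(c)^2$. Conversely, let $(f,g)\in FG(c)$ be arbitrary; we may assume $\|f\|_2>0$, whence $g\not\equiv0$, and for the normalized pair $(f/\|f\|_2,\,g/\|g\|_2)\in S^1\times S^1$ one has $\left|\frac{f/\|f\|_2}{g/\|g\|_2}\right|=\frac{\|g\|_2}{\|f\|_2}\cdot\frac{|f|}{|g|}\le\frac{\|g\|_2}{\|f\|_2}$ on $A(c,1)$, so $F(c)\le\|g\|_2/\|f\|_2$. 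This gives $\|g\|_2^2\ge F(c)^2\|f\|_2^2$, hence $\|f\|_2^2-\|g\|_2^2\le\|f\|_2^2\bigl(1-F(c)^2\bigr)\le1-F(c)^2$; taking the supremum over $FG(c)$ yields $F_B(c)\le1-F(c)^2$.

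For left‑continuity of $F$ at a point $c_0$, the case $F(c_0)=1$ is trivial (then $c_0\le\kappa$ and $F\equiv1$ on $(0,c_0]$), so suppose $F(c_0)<1$ and fix an extremal pair $(f_0,g_0)$ for $F(c_0)$. The poles of the meromorphic function $f_0/g_0$ lying in $\{|z|\le c_0\}$ form a finite set, all of modulus strictly less than $c_0$ (there are none on $A(c_0,1)\supseteq\{|z|=c_0\}$, where the quotient is bounded by $F(c_0)$). Hence for $c<c_0$ close enough to $c_0$ the pair $(f_0,g_0)$ is admissible for $F(c)$ and
\[
F(c)\ \le\ \sup_{c\le|z|<1}\left|\frac{f_0(z)}{g_0(z)}\right|\ =\ \max\left\{\ \sup_{c\le|z|\le c_0}\left|\frac{f_0(z)}{g_0(z)}\right|,\ F(c_0)\ \right\}.
\]
As $c\uparrow c_0$ the first term tends to $\sup_{|z|=c_0}|f_0/g_0|\le F(c_0)$, so $F(c_0-)\le F(c_0)$; since $F$ is non‑increasing, $F(c_0-)=F(c_0)$. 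By the identity, $F_B$ is left‑continuous as well.

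The remaining, and most delicate, step is right‑continuity of $F_B$ at $c_0$; the subtlety is that the $\mathcal{A}^2$‑norm is only lower semicontinuous under local uniform convergence, so mass can escape to $|z|=1$ in a limiting procedure. Choose $c_n\downarrow c_0$ and pairs $(f_n,g_n)\in FG(c_n)$ with $\|f_n\|_2^2-\|g_n\|_2^2\ge F_B(c_n)-\tfrac1n$; by Montel's theorem a subsequence converges locally uniformly to functions $(f,g)$, and $(f,g)\in FG(c_0)$ because $|f|\le|g|$ passes to the limit on $A(c,1)$ for every $c>c_0$ (hence on $A(c_0,1)$) and $\|f\|_2,\|g\|_2\le1$ by Fatou. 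The escape of mass now works in our favor: for any $\rho$ with $c_1\le\rho<1$ one has $|f_n|\le|g_n|$ on $A(\rho,1)\subseteq A(c_n,1)$, so the annular contribution to $\|f_n\|_2^2-\|g_n\|_2^2$ is $\le0$, and therefore
\[
\|f_n\|_2^2-\|g_n\|_2^2\ \le\ \frac1\pi\int_{\mathbb{D}_\rho}\bigl(|f_n|^2-|g_n|^2\bigr)\,dA,
\]
and the right‑hand side converges, as $n\to\infty$, to $\frac1\pi\int_{\mathbb{D}_\rho}\bigl(|f|^2-|g|^2\bigr)\,dA$ by uniform convergence on the compact set $\overline{\mathbb{D}_\rho}$. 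Hence $F_B(c_0+)=\lim_n F_B(c_n)\le\frac1\pi\int_{\mathbb{D}_\rho}(|f|^2-|g|^2)\,dA$ for every such $\rho$, and letting $\rho\uparrow1$ gives $F_B(c_0+)\le\|f\|_2^2-\|g\|_2^2\le F_B(c_0)$; monotonicity forces equality. By the identity $F=\sqrt{1-F_B}$, $F$ is right‑continuous, so both $F$ and $F_B$ are continuous on $(0,1)$. The main obstacle is precisely this last step — producing a usable limit pair and controlling the norm defect — which the tail estimate above resolves.
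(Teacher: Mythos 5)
Your proof is correct, and it takes a genuinely different route from the paper's. The paper argues directly on $F(c)$ by splitting into $c_n\nearrow c$ and $c_n\searrow c$: for the first case it chooses $\delta$ with $F(c-\delta)<F(c)+\varepsilon$ (which essentially presupposes the continuity being proved, and asserts $F(c-\delta)=\sup_{c-\delta\leq|z|<1}|f/g|$ for the pair extremal at $c$, which is only an inequality), and for the second case it introduces a quantity $\delta(c_n)$ and a renormalized $f_n=(1-\delta(c_n))f/|1-\delta(c_n)|$ that is identically $f$; the continuity of $F_B$ is then dispatched as ``similar.'' Your argument instead rests on the identity $F_B(c)=1-F(c)^2$, which the paper never states (it only uses the inequality $F_B(c)\geq 1-F(c)^2$ implicitly in the proof of Theorem \ref{lem:prob2}(2)); both directions of your verification of this identity are sound, the $\leq$ direction via normalizing an arbitrary admissible pair and the $\geq$ direction via the pair $(f_0,F(c)g_0)$. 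This collapses the two continuity claims into one and lets you prove each one-sided limit in whichever formulation is more tractable: left-continuity of $F$ by the direct competitor argument, made rigorous by locating the poles of $f_0/g_0$ strictly inside $|z|<c_0$ (exactly the point the paper's Case~1 leaves unjustified), and right-continuity of $F_B$ by a normal-family compactness argument in which the tail estimate $\|f_n\|_2^2-\|g_n\|_2^2\leq\pi^{-1}\int_{\mathbb{D}_\rho}(|f_n|^2-|g_n|^2)\,dA$ neutralizes the possible loss of norm at the boundary. What your approach buys is a complete and checkable proof precisely where the paper's is weakest, plus a structural relation between Problems \ref{prob 1} and \ref{prob 2} that is of independent interest (for instance, it shows the two bounds in Theorem \ref{lem:prob2} are equivalent to one another); the cost is reliance on the existence of an extremal pair (Proposition \ref{prop 1}) and on the elementary pointwise bound $|h(z)|\leq\|h\|_2/(1-|z|^2)$ needed to invoke Montel for the $FG(c_n)$ pairs, both of which are available in the paper.
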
%

\begin{proof}%
We shall prove the continuity of $F(c)$ here. The proof of continuity for $F_B(c)$ follows in a similar way.\\
\noindent
Case $1$: Let $c_n \nearrow c$ as $n\to \infty$. Since $F(c)$ is non-increasing in $(0,1)$ then $c_n\leq c$ implies $F(c_n)\geq F(c)$. Therefore, $\lim_{n\to\infty} F(c_n) \geq F(c)$. Next we claim that $\lim_{n\to \infty} F(c_n)\leq F(c)$. Let $(f,g)$ be extremal for $F(c)$. Then $\sup_{c\leq |z|<1} \left| f(z)/g(z)\right|=F(c)$. Suppose if possible, $\lim_{n\to\infty} F(c_n)=F_{-}(c)>F(c)$. For $\varepsilon=(F_{-}(c)-F(c))/2$, there exists $\delta>0$ such that 
\begin{eqnarray*}%
F(c-\delta)=\sup \limits_{c-\delta\leq |z|<1} \left| \frac{f(z)}{g(z)} \right|<F(c)+\varepsilon.
\end{eqnarray*}%
But $c-\delta<c_n<c$, which gives a contradiction to the fact that $F(c)$ is non-increasing. \\
\noindent
Case $2$: Let $c_n \searrow c$ as $n\to \infty$. Since $F(c)$ is non-increasing in $(0,1)$, then
$c_n\to c\Rightarrow F(c_n)\leq F(c) \Rightarrow \lim_{n\to\infty} F(c_n)\leq F(c)$. We next claim that $\lim_{n\to\infty} F(c_n)\geq F(c)$. Let $\delta(c_n)$ be defined as in (\ref{eq 6.5}) since the supremum of the larger set $c_n\leq |z|<1$ is greater than the supremum of the smaller set $c\leq |z|<1$. Then
\begin{equation}\label{eq 6.5}%
\sup \limits_{c_n \leq |z|<1} \left |\frac{f(z)}{g(z)} \right|\leq \frac{F(c)}{1-\delta(c_n)},
\end{equation}%
where $\delta(c_n)>0$ and $\delta(c_n)\to 0$ as $c_n\to c$. Define 
\begin{eqnarray*}
f_n(z) :=\frac{(1-\delta(c_n))f(z)}{|1-\delta(c_n)|}.
\end{eqnarray*}
Note that $\|f_n\|_2=1$, $\|g\|_2=1$. Thus, $(f_n, g)$ is admissible and $\left| f_n(z)/g(z)\right|\leq F(c_n)$. This implies,
\begin{eqnarray*}%
F(c)=\left|\frac{f(z)}{g(z)}\right|=\lim \limits_{n\to \infty} \left| \frac{f_n(z)}{g(z)}\right|\leq \lim \limits_{n\to \infty} F(c_n),
\end{eqnarray*}%
which completes the proof.
\end{proof}%

\begin{lemma}%
Both $F(c)$ and $F_B(c)$ are homeomorphisms from $[\kappa,1]$ to $[0,1]$.
\end{lemma}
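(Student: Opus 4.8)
The plan is to handle $F$ first and then transfer everything to $F_B$ through the identity $F_B(c)=1-F(c)^2$, which I would establish en route. For $F$ I already have, from Lemma~\ref{lem-cont}, continuity on $(0,1)$, and from Lemma~\ref{lem-mon}(1), strict decrease on $(\kappa,1)$; the text also records $F\equiv 1$ on $(0,\kappa]$ and $0<F(c)<1$ on $(\kappa,1)$. So the two things still to do are: extend $F$ continuously to the endpoint $c=1$ and pin down the boundary values $F(\kappa)=1$, $F(1)=0$; then upgrade strict monotonicity to the closed interval and invoke an elementary topological fact.

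For the identity I would prove two inequalities. For ``$\ge$'': take an extremal pair $(f,g)$ for $F(c)$ (Proposition~\ref{prop 1}); then $|f(z)|\le F(c)\,|g(z)|$ for $c\le|z|<1$, so $\tilde g:=F(c)\,g$ satisfies $|f|\le|\tilde g|$ there with $\|f\|_2=1$ and $\|\tilde g\|_2=F(c)\le 1$, hence $(f,\tilde g)\in FG(c)$ and $F_B(c)\ge\|f\|_2^2-\|\tilde g\|_2^2=1-F(c)^2$. For ``$\le$'': given $(f,g)\in FG(c)$ with $f\not\equiv 0$ (so necessarily $g\not\equiv 0$, since $|f|\le|g|$; the case $f\equiv 0$ is trivial), the bound $|f|\le|g|$ on $\{c\le|z|<1\}$ forces $f/g$ to extend analytically across the zeros of $g$ there, so $\sup_{c\le|z|<1}|f/g|\le 1$; normalizing to $\hat f=f/\|f\|_2$, $\hat g=g/\|g\|_2\in S^1$ and using the definition of $F(c)$ yields $F(c)\le\|g\|_2/\|f\|_2$, hence $\|f\|_2^2-\|g\|_2^2\le\|f\|_2^2\bigl(1-F(c)^2\bigr)\le 1-F(c)^2$, and taking the supremum gives $F_B(c)\le 1-F(c)^2$. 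Thus $F_B(c)=1-F(c)^2$ on $(0,1)$; in particular $F_B$ inherits the continuity of $F$, equals $0$ on $(0,\kappa]$, and lies in $(0,1)$ on $(\kappa,1)$.

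Now the boundary values. Continuity of $F$ at $\kappa\in(0,1)$ together with $F\equiv 1$ to its left gives $F(\kappa)=1$, hence $F_B(\kappa)=0$. At $c=1$ comes the one genuinely new computation: test with $f_n\equiv 1$ and $g_n(z)=\sqrt{n+1}\,z^n$, both in $S^1$ by Lemma~\ref{lem-norm}; then $\sup_{c\le|z|<1}|f_n/g_n|=\bigl(\sqrt{n+1}\,c^n\bigr)^{-1}$, so $F(c)\le\bigl(\sqrt{n+1}\,c^n\bigr)^{-1}$, and letting $c\to1^-$ gives $\limsup_{c\to1^-}F(c)\le(n+1)^{-1/2}$ for every $n\ge 0$; since $F>0$, this forces $\lim_{c\to1^-}F(c)=0$, and then $\lim_{c\to1^-}F_B(c)=1$ by the identity. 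I would then set $F(1):=0$ and $F_B(1):=1$, so both functions extend continuously to the closed interval $[\kappa,1]$.

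Finally I would assemble the pieces: on $[\kappa,1]$ the function $F$ is continuous with $F(\kappa)=1$, $F(1)=0$, and it is strictly decreasing on the whole interval because it is so on $(\kappa,1)$ while $F(\kappa)=1>F(c)>0=F(1)$ there; by the intermediate value theorem $F([\kappa,1])=[0,1]$, so $F$ is a continuous strictly decreasing bijection of a compact interval onto $[0,1]$, hence a homeomorphism (its inverse, being monotone and surjective onto an interval, is automatically continuous). The same argument gives the claim for $F_B$, now strictly increasing (Lemma~\ref{lem-mon}(3), or directly from $F_B=1-F^2$), with $F_B(\kappa)=0$ and $F_B(1)=1$. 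I expect the only genuinely nontrivial step to be the limit $F(1^-)=0$ via the test pair $(f_n,g_n)$ above; everything else is organizing Lemmas~\ref{lem-mon} and~\ref{lem-cont} together with the elementary fact that a continuous strictly monotone surjection between compact intervals is a homeomorphism.
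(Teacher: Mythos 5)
Your proof is correct, but it takes a genuinely different route from the paper's. The paper proves the four endpoint limits separately: it gets $\lim_{c\to1^-}F(c)=0$ from a normalized Blaschke-type pair $f=1$, $g_n=\tfrac{1}{r}\tfrac{z^n+r}{1+rz^n}/\|\cdot\|_2$, and $\lim_{c\to1^-}F_B(c)=1$ from a second, independent construction $f_0=1$, $h_n=k\tfrac{z^n-r^n}{1-r^nz^n}$ with $\|h_n\|_2\to0$. You instead establish the identity $F_B(c)=1-F(c)^2$ and transfer everything through it, so only one endpoint computation is needed, and your test pair $(1,\sqrt{n+1}\,z^n)$ is simpler and gives the bound $F(c)\le(\sqrt{n+1}\,c^n)^{-1}$ with no estimates on Blaschke factors. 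The identity itself is the real added content: the paper proves only the half $1-F(c)^2\le F_B(c)$ (inside the proof of Theorem \ref{lem:prob2}(2), where the extremal pair $(f,F(c)g)$ is shown admissible for Problem \ref{prob 2}); your converse inequality, via normalizing an arbitrary admissible pair and using $\sup_{c\le|z|<1}|f/g|\le1$, is new relative to the paper and is worth recording as a standalone statement, since it also makes Lemma \ref{lem-mon}(3) and the continuity of $F_B$ immediate consequences of the corresponding facts for $F$. The one step that deserves an explicit sentence is the claim $\sup_{c\le|z|<1}|f/g|\le1$ for $(f,g)\in FG(c)$ when $g$ has a zero on the inner circle $|z|=c$: there the inequality $|f|\le|g|$ is only available on a half-neighborhood, but that still rules out a pole of $f/g$ and forces the removable singularity, so your argument goes through. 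With that detail spelled out, both proofs are complete; yours is shorter and structurally cleaner, while the paper's keeps the two functions logically independent.
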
%

\begin{proof}%
$F(c)$ and $F_B(c)$ are both continuous and strictly monotonic by Lemma \ref{lem-cont} and \ref{lem-mon}. Then by the inverse function theorem, $F^{-1}(c)$ and $F_B^{-1}(c)$ both exist. Also, $F^{-1}(c)$ and $F_B^{-1}(c)$ are strictly monotonic. Thus, both $F^{-1}(c)$ and $F_B^{-1}(c)$ are continuous. Therefore, it is sufficient to prove 
\begin{enumerate}
\item[(i)] $\lim_{c\to 1^-} F(c)=0$ and $\lim_{c\to \kappa^+} F(c)=1 $.
\item[(ii)] $\lim_{c \rightarrow 1^-} F_B(c)=1$ and $\lim_{c \rightarrow \kappa^+} F_B(c)=0$.
\end{enumerate}
The second equalities in (i) and (ii) are true by the definition of functions $F(c)$ and $F_B(c)$ respectively. So, we are left to prove that $\lim_{c\to 1^-} F(c)=0$ and $\lim_{c \rightarrow 1^-} F_B(c)=1$. Fix $0<r<1$ sufficiently small, choose $N=N(n)$ such that for $n>N$, $\varepsilon(r,n)<1$. Take $c<1$ such that $\left | \frac{1+rz^n}{z^n+r} \right|<2$ for $c\leq |z|<1$. Consider $f(z)=1$, $g_n(z)=\frac{1}{r} \frac{z^n+r}{1+rz^n}/ \|\frac{1}{r} \frac{z^n+r}{1+rz^n} \|_2$. Clearly, $\|f\|_2=1$ and $ \|g_n\|_2= 1$. Also $g_n$ converges to $1$ as $n\to\infty$ uniformly on compact subsets of $\mathbb{D}$. Therefore, $(f, g_n)$ is admissible for $F(c)$. Then
\[
\left | \frac{f(z)}{g_n(z)}\right | =\left| r \frac{1+rz^n}{z^n+r} (1+\varepsilon(r,n))\right| \leq r \left | \frac{1+rz^n}{z^n+r}\right| |1+\varepsilon(r,n)| \leq 4r
\]
This means, $\omega_n(z) :=f(z)/g_n(z) \leq 4r$ for all $z$ such that $c\leq |z|<1$. This implies $|f(z)|\leq 4r |g_n(z)|$ for all $z$ in $\mathbb{D}$. Then by the Maximum Modulus Theorem, $f(z)= e^{i\beta} g(z) 4r$. But $\|f\|_2=4r \|g_n\|_2 =4r$, contradicting $f \in S^1$. Therefore, $0\leq |\omega_n(z)|\leq F(c)\leq 4r$. Thus $F(c) \to 0$ as $c\to 1$. To prove (ii), let $0<r<1,\ k>1$ and $n\geq 1$, where $n \in \mathbb{N}$. Consider $f_0(z)=1$ and $\displaystyle h_n(z)=k \frac{z^n-r^n}{1-r^n z^n}$. Then for every $n$, there is $c_n$, $0<c_n<1$ such that for all $z$ in $c_n\leq |z|<1$,
\[ 1=|f_0(z)|\leq k \left| \frac{z^n-r^n}{1-r^n z^n}\right| \]
Note that, since $r,|z|<1$, then 
\[ \lim_{n\to\infty} h_n(z)=k\lim_{n\to\infty} \frac{z^n-r^n}{1-r^n z^n}=k\frac{0-0}{1-0}=0 \]
that is, $h_n(z)\rightarrow 0$ as $n\rightarrow \infty$ and $\mathcal{A}^2(\mathbb{D})$ being a Banach space implies that $\|h_n\|_2 \rightarrow 0$ as $n\rightarrow\infty$. Also, $\|h_n\|_2\leq 1$ for $n\geq N$, where $N$ is sufficiently large depending on $k$. Therefore, $(f_0,h_n) \in FG(c_n)$ for all $n\geq N$. Note that, $\|f_0\|_2^2-\|h_n\|_2^2 \rightarrow 1$ as $n\rightarrow \infty$. Also,
$\|f_0\|_2^2-\|h_n\|_2^2 \leq F(c_n) \leq 1$. Taking the limit as $n\to \infty$, we obtain
\[ 1=\lim_{n\to \infty} \|f_0\|_2^2-\|h_n\|_2^2 \leq \lim_{n\to\infty} F(c_n) \leq \lim_{n\to \infty}1=1 \]
Therefore, $\lim \limits_{c \rightarrow 1^-} F_B(c)=1$.
\end{proof}%

Let us prove the following Theorem \ref{lem:prob2} which gives us an upper bound and a lower bound for $F_B(c)$ and $F(c)$ respectively (Figure \ref{fig:5} illustrates the lower bound for $F(c)$).

\begin{theorem}\label{lem:prob2}%
For $0<c<1$,
\begin{enumerate}[font=\normalfont]
\item[{\rm(1).}] $F_B(c) \leq c^2$.
\item[{\rm(2).}] $F(c)>\sqrt{1-c^2}$.
\end{enumerate}
\end{theorem}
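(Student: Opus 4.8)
For part~(1), the plan is to split the Bergman integral defining $\|f\|_2^2-\|g\|_2^2$ over the disk $\mathbb{D}_c=\{|z|<c\}$ and the annulus $A(c,1)$. For an admissible pair $(f,g)\in FG(c)$ one has $|f(z)|\le|g(z)|$ on $A(c,1)$, so the integrand $|f|^2-|g|^2$ is nonpositive there, and hence
\[
\|f\|_2^2-\|g\|_2^2\le\frac{1}{\pi}\int_{\mathbb{D}_c}\bigl(|f(z)|^2-|g(z)|^2\bigr)\,dA\le\frac{1}{\pi}\int_{\mathbb{D}_c}|f(z)|^2\,dA.
\]
Writing $f(z)=\sum_{k\ge0}a_kz^k$ and integrating in polar coordinates gives $\frac{1}{\pi}\int_{\mathbb{D}_c}|f|^2\,dA=\sum_{k\ge0}\frac{|a_k|^2}{k+1}c^{2k+2}$; since $0<c<1$ we have $c^{2k+2}\le c^2$ for every $k\ge0$, so by Lemma~\ref{lem-norm} this is at most $c^2\|f\|_2^2\le c^2$. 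Taking the supremum over admissible pairs yields $F_B(c)\le c^2$.

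For part~(2), I would invoke the extremal pair $(f_0,g_0)\in S^1$ for $F(c)$ furnished by Proposition~\ref{prop 1}, so that $\lambda:=F(c)=\sup_{c\le|z|<1}|f_0(z)/g_0(z)|$. Since $\lambda\le1<\infty$, the quotient $f_0/g_0$ has no pole in $A(c,1)$; hence $f_0$ vanishes at every zero of $g_0$ lying in $A(c,1)$ to at least the same order, and therefore the pointwise bound $|f_0(z)|\le\lambda\,|g_0(z)|$ holds for \emph{all} $z\in A(c,1)$. Splitting $1=\|f_0\|_2^2$ over $\mathbb{D}_c$ and $A(c,1)$, bounding the disk part by $c^2\|f_0\|_2^2=c^2$ exactly as in part~(1) and the annulus part by $\lambda^2\,\frac{1}{\pi}\int_{A(c,1)}|g_0|^2\,dA$, I obtain $1\le c^2+\lambda^2 m$, where $m:=\frac{1}{\pi}\int_{A(c,1)}|g_0(z)|^2\,dA$.

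It then remains to upgrade $1\le c^2+\lambda^2 m$ to a \emph{strict} inequality, and this is the delicate point. Since $g_0\in S^1$ is analytic and not identically zero, $|g_0|^2$ is positive on a set of positive measure inside $\mathbb{D}_c$, so $0<m=1-\frac{1}{\pi}\int_{\mathbb{D}_c}|g_0|^2\,dA<1$; also $\lambda=F(c)>0$. Dividing $1-c^2\le\lambda^2 m$ by $m$ and using $m<1$ gives $\lambda^2\ge(1-c^2)/m>1-c^2$, that is, $F(c)>\sqrt{1-c^2}$. I expect the main obstacle to be precisely this passage to strictness: arguing directly from the infimum in the definition of $F(c)$, via near-extremal pairs, only yields $F(c)\ge\sqrt{1-c^2}$, so it is essential to have a genuine extremal pair and to exploit that $g_0$ carries positive Bergman mass on $\mathbb{D}_c$. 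A secondary technical point to handle with care is the treatment of the zeros of $g_0$ inside the annulus when upgrading $\sup|f_0/g_0|\le\lambda$ to the everywhere-valid inequality $|f_0|\le\lambda|g_0|$ on $A(c,1)$.
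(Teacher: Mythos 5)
Your proof is correct, but it takes a genuinely different route from the paper's in both parts. For (1) the paper does not split the integral at $|z|=c$: it takes an extremal pair for $F_B(c)$, compares it with the dilated pair $f_\rho(z)=f(\rho z)$, $g_\rho(z)=g(\rho z)$ to get $F_B(c)\le \rho^2 F_B(c/\rho)$, and converts this into the differential inequality $F_B'(c)/F_B(c)\ge 2/c$, which it integrates to obtain $F_B(c)\le c^2$. Your two-region decomposition plus the coefficient formula of Lemma~\ref{lem-norm} (using $c^{2k+2}\le c^2$) is more elementary: it applies to every admissible pair directly, and it avoids both the existence of an extremal pair for $F_B$ and the a.e.-differentiability/integration step for the monotone function $F_B$. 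For (2) the paper deduces the bound from (1) by observing that $(f,F(c)g)$ is admissible for Problem~\ref{prob 2}, so $1-F^2(c)\le F_B(c)$; to get strictness it invokes $F_B(c)<c^2$, which sits a little awkwardly with the non-strict statement of (1). Your direct estimate $1\le c^2+\lambda^2 m$ with $0<m<1$ (because the analytic function $g_0\in S^1$ carries positive Bergman mass on $\mathbb{D}_c$) supplies the strict inequality cleanly, and your handling of the zeros of $g_0$ in the annulus (finiteness of the supremum forces $f_0$ to vanish there to at least the same order, so $|f_0|\le\lambda|g_0|$ holds everywhere on $A(c,1)$) matches the discussion in the paper following Proposition~\ref{prop 1}. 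In short: same statement, different and in places tidier mechanism; each part of your argument is self-contained where the paper's relies on the variational dilation trick and on part (1) respectively.
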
%

\begin{proof}%
(1). Lemma \ref{lem-cont} guarantees the continuity of functions $F(c)$ and $F_B(c)$. Further, $F_B(c)$ is non-decreasing which implies the derivative of $F_B(c)$ which is $F_B^{\prime}(c)$ exists almost everywhere. Thus, we will consider points $c$ such that $F_B^{\prime}(c)$ exists. Let $(f,g)$ be extremal for $F(c)$ and let $\rho<1$ be sufficiently close to $1$. Consider $f_{\rho}(z)=f(\rho z)$ and $g_{\rho}(z)=g(\rho z)$. Then
\begin{align*}%
F_B(c) &= \|f\|_2^2-\|g\|_2^2 \\
&=\int \limits_{\mathbb{D}} |f(z)|^2\; dA(z)-\int \limits_{\mathbb{D}} |g(z)|^2\; dA(z) \\
&<\int \limits_{\mathbb{D}_{\rho}} |f(z)|^2\; dA(z)-\int \limits_{\mathbb{D}_{\rho}} |g(z)|^2\; dA(z) \\
&= \int \limits_{\mathbb{D}} |f_{\rho}(\zeta)|^2 \rho^2\; dA(\zeta)-\int \limits_{\mathbb{D}} |g_{\rho}(\zeta)|^2 \rho^2\; dA(\zeta)\\
&= \rho^2(\|f_{\rho}\|_2^2-\|g_{\rho}\|_2^2) \leq \rho^2 F_B\left(c/\rho\right)\\
\end{align*}%
The last inequality follows from the fact that $|f_{\rho}(z)| \leq |g_{\rho}(z)|$ for all $z$ such that  $c/\rho\leq |z|<1$. Therefore, we obtain $F_B(c) \leq \rho^2 F_B\left(c/\rho\right)$. This further implies
 \begin{eqnarray*}%
 F_B^{\prime}(c)=\lim_{\rho \rightarrow 1^{-}} \frac{F_B \left(c/\rho\right)-F_B(c)}{c/\rho-c} \geq \lim_{\rho \rightarrow 1^{-}} \frac{F_B \left(c/\rho\right)-\rho^2 F_B\left(c/\rho\right)}{c(1-\rho)/\rho}=\frac{2 F_B(c)}{c}.
\end{eqnarray*}%
 Thus,
 \begin{equation}\label{eq 3.6d}%
 \frac{F_B^{\prime}(c)}{F_B(c)} \geq \frac{2}{c} 
 \end{equation}%
 Integrating (\ref{eq 3.6d}) from $c$ to $1$, we obtain $\log F_B(1)- \log F_B(c) \geq -2\log c$. Equivalently $F_B(c)\leq c^2$.\\

\noindent
(2). Let $(f,g)$ be an extremal pair for $F(c)$. Then $|f(z)|\leq F(c) |g(z)|$ for all $z$ such that $c\leq |z|<1$. Therefore, the pair of functions $(f,F(c)g)$ is admissible for Problem \ref{prob 2}. Thus by Lemma \ref{lem:prob2} ($1$), 
\begin{eqnarray*}%
\int \limits_{|z|<1} |f(z)|^2\; dA(z)-\int \limits_{|z|<1} F^2(c) |g(z)|^2\; dA(z)\leq F_B(c)<c^2, 
\end{eqnarray*}%
for all $0<c<1$. Since $f$, $g \in S^1$, then $1-F^2(c)<c^2$ for $0<c<1$. Thus the result follows. 
\end{proof}%


\section{Some special cases}%
\setcounter{equation}{0} %

We shall now introduce problems related to polynomials, Blaschke products and bounded functions. 

\subsection{Polynomials of degree n}%
For $n\geq 1$, consider the class $\mathcal{P}_n$ of polynomials of degree at most $n$. 

\begin{problem}%
Given $0<c<1$ and $n\geq 1$,
\begin{enumerate}
\item[{\rm 1.}] Find
\begin{equation}\label{eq 3.5}%
F_n(c)= \inf \limits_{p,q\in S^1 \cap \mathcal{P}_n} \sup \limits_{c\leq |z|<1} \left |\frac{p(z)}{q(z)} \right |
\end{equation}%

\item[{\rm 2.}] Find an extremal pair of polynomials $p,q\in S^1$ of degree at most $n$ such that $F(c)=\left|p(z_2)/q(z_2)\right|$ for some $z_2 \in  (c,1)$.
\end{enumerate}
\end{problem}%

Note that, $0<F_n(c)\leq 1$. Further, $\kappa_n$ is Korenblum's constant for polynomials of degree at most $n\geq 1$ such that $F_n(c)=1$ for $0<c\leq \kappa_n$ and $ F_n(c)<1$ for $\kappa_n<c<1$. Note that, since $\mathcal{P}_n \subseteq \mathcal{P}_{n+1}$, it follows that
\begin{eqnarray*}%
0<\kappa \leq \kappa_{n+1} \leq \kappa_n \leq \ldots \leq \kappa_2 \leq \kappa_1 < 1.
\end{eqnarray*}%

\begin{lemma}\label{lem-limpol}
 $\kappa =\lim_{n\to\infty} \kappa_n$.
 \end{lemma}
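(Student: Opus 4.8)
The plan is to show that the decreasing sequence $\kappa_n$ cannot stabilize strictly above $\kappa$. Since $\kappa\le\kappa_{n+1}\le\kappa_n<1$, the limit $\kappa_\infty:=\lim_{n\to\infty}\kappa_n$ exists and $\kappa_\infty\ge\kappa$, and everything reduces to the reverse inequality. For that it is enough to prove that \emph{for every $c$ with $\kappa<c<1$ there is an $n$ with $F_n(c)<1$}: granting this, $F_n(c)<1$ forces $c>\kappa_n\ge\kappa_\infty$, and since this holds for all $c>\kappa$ we get $\kappa_\infty\le\kappa$, hence $\kappa_\infty=\kappa$. So fix $c\in(\kappa,1)$; the goal is to produce, for some large $n$, polynomials $p,q\in S^1\cap\mathcal{P}_n$ with $\sup_{c\le|z|<1}|p(z)/q(z)|<1$.

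First I would create slack by shrinking the radius. Choose $c'$ with $\kappa<c'<c$; then $F(c')<1$, and Proposition~\ref{prop 1} gives an extremal pair $(f,g)\in S^1$, so that $|f(z)|\le F(c')|g(z)|$ for $z\in A(c',1)$. Pick a dilation parameter $\rho$ with $c'/c<\rho<1$ (to be taken close to $1$) and set $f_\rho(z)=f(\rho z)$, $g_\rho(z)=g(\rho z)$, which are analytic on $\{|z|<1/\rho\}$. Because $\rho>c'/c$, every $z$ in the \emph{closed} annulus $\{c\le|z|\le1\}$ satisfies $c'\le\rho|z|<1$, so $|f_\rho(z)|\le F(c')|g_\rho(z)|$ on all of $\{c\le|z|\le1\}$. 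By the last lemma of Section~3, $\|f_\rho\|_2\to\|f\|_2=1$ and $\|g_\rho\|_2\to\|g\|_2=1$ as $\rho\to1^-$, so I would also demand that $\rho$ be close enough to $1$ that $F(c')\,\|g_\rho\|_2/\|f_\rho\|_2<1$; this is possible since the left side tends to $F(c')<1$ and the interval $(c'/c,1)$ is nonempty.

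Next I approximate $f_\rho$ and $g_\rho$ by polynomials, with one correction for common zeros on the annulus. As in the proof of Proposition~\ref{prop 1}, $f_\rho/g_\rho$ is meromorphic on $\{|z|<1/\rho\}$ and, being bounded by $F(c')$ along sequences in $\{c\le|z|\le1\}$ that avoid the (finitely many) zeros of $g_\rho$ there, it has no pole on that closed annulus; hence at each zero $a_j$ (of order $\ell_j$, $j=1,\dots,m$) of $g_\rho$ lying in $\overline{A(c,1)}$, the function $f_\rho$ vanishes to order at least $\ell_j$. Put $\pi(z)=\prod_{j=1}^m(z-a_j)^{\ell_j}$; then $f_\rho^{*}:=f_\rho/\pi$ and $g_\rho^{*}:=g_\rho/\pi$ are analytic on $\overline{\mathbb{D}}$, $g_\rho^{*}$ is zero-free on $\overline{A(c,1)}$, and $|f_\rho^{*}|\le F(c')|g_\rho^{*}|$ there. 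Let $P_N^{*},Q_N^{*}$ be the degree-$N$ Taylor polynomials of $f_\rho^{*},g_\rho^{*}$; these converge uniformly on $\overline{\mathbb{D}}$, hence in $\|\cdot\|_2$, so for large $N$ the polynomial $Q_N^{*}$ is bounded away from $0$ on $\overline{A(c,1)}$ and $P_N^{*}/Q_N^{*}\to f_\rho^{*}/g_\rho^{*}$ uniformly there. Set $P_N=\pi P_N^{*}$, $Q_N=\pi Q_N^{*}$ (polynomials of degree $N+\deg\pi$) and normalize $p_N=P_N/\|P_N\|_2$, $q_N=Q_N/\|Q_N\|_2\in S^1$. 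Then $\|P_N\|_2\to\|f_\rho\|_2$, $\|Q_N\|_2\to\|g_\rho\|_2$, and
\[
\sup_{c\le|z|<1}\left|\frac{p_N(z)}{q_N(z)}\right|
=\frac{\|Q_N\|_2}{\|P_N\|_2}\sup_{c\le|z|<1}\left|\frac{P_N^{*}(z)}{Q_N^{*}(z)}\right|
\;\xrightarrow[N\to\infty]{}\;
\frac{\|g_\rho\|_2}{\|f_\rho\|_2}\sup_{c\le|z|<1}\left|\frac{f_\rho^{*}(z)}{g_\rho^{*}(z)}\right|
\le F(c')\,\frac{\|g_\rho\|_2}{\|f_\rho\|_2}<1.
\]
So for $N$ large the pair $(p_N,q_N)$ shows $F_n(c)<1$ with $n=N+\deg\pi$, which is the desired statement.

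The step I expect to be the main obstacle is the very last one: a priori the normalization factor $\|Q_N\|_2/\|P_N\|_2$ could exceed $1/F(c')$ and destroy the strict inequality. What rescues it is the freedom in the dilation: by the Section~3 lemma $\|h_\rho\|_2\to\|h\|_2$, so letting $\rho$ tend to $1$ makes $\|g_\rho\|_2/\|f_\rho\|_2$ as close to $1$ as we wish. This forces the parameters to be fixed in the order $c'$, then $\rho$, then $N$. Everything else --- dividing out the common boundary zeros via $\pi$, and Taylor approximation of the resulting analytic functions --- is routine, the only subtlety being the (standard) fact that boundedness of $f_\rho/g_\rho$ near a boundary point of the annulus forces $f_\rho$ to share the corresponding zero of $g_\rho$ to at least the same order.
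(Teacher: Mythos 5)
Your proof is correct and follows the same basic strategy as the paper's: take an extremal pair for the analytic problem at a radius slightly inside $c$, dilate via $z\mapsto\rho z$ to create room, truncate to polynomials, and renormalize, with the dilation absorbing both the truncation error and the normalization factor. The differences are in organization and in one point of rigor. You fix $c>\kappa$, insert a buffer radius $c'\in(\kappa,c)$, prove $F_n(c)<1$ outright, and only then let $c$ tend to $\kappa$; the paper instead runs a sequence $c_m\downarrow\kappa$ of extremal pairs and tracks the perturbed radii $(c_m+\delta_m)/(1-\delta_m)$. More substantively, the paper upgrades $|f_m|\le|g_m|$ to a strict inequality on a sub-annulus and then needs the uniform gap $|g_m(z)|-|f_m(z)|>\varepsilon_m$ to conclude that the partial sums still satisfy $|S_n(f_m)|\le|S_n(g_m)|$; that uniform gap fails near the common zeros of $f_m$ and $g_m$ in the annulus (and wherever $g_m$ is small), a point the paper does not address. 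Your device of dividing out the common zero divisor $\pi$ and Taylor-approximating the resulting zero-free pair handles exactly this difficulty, and your explicit ordering of the parameters ($c'$, then $\rho$, then $N$) makes transparent why the renormalization constant $\|Q_N\|_2/\|P_N\|_2$ cannot destroy the strict inequality. In short: same route, but your write-up closes a gap that the paper's version leaves open.
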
%
 
 \begin{proof}%
 Clearly, $\kappa\leq \kappa_n$ for all $n \in \mathbb{N}$. Consider a sequence $c_m \rightarrow \kappa$ such that $c_m>c_{m+1}$. Let $(f_m,g_m)$ be an extremal pair for $F(c_m)$. Then
 \begin{eqnarray*}%
 F(c_m)=\sup \limits_{c_m\leq |z|<1} \left | \frac{f_m(z)}{g_m(z)}\right|\leq 1.
\end{eqnarray*}%
 Therefore,
 \begin{equation}\label{eq 3.9a}%
|f_m(z)|\leq |g_m(z)|,\ \forall z,\ c_m\leq |z|<1. 
\end{equation}%
Note that, to achieve a strict inequality in (\ref{eq 3.9a}), consider
\begin{equation}\label{eq 3.9f}%
|f_m(z)|< |g_m(z)|,\ \forall z,\ c_m+\delta_m\leq |z|<1.
\end{equation}%
Consider the $n$th partial sums of Taylor series of $f_m(z)$ and $g_m(z)$ as
\begin{eqnarray*}%
S_n(f_m)(z)=\sum_{k=0}^n a_{k,m} z^k,\; S_n(g_m)(z)=\sum_{k=0}^n b_{k,m} z^k.
\end{eqnarray*}%
For $\varepsilon_m>0$ and sufficiently small, we obtain
\begin{equation}\label{eq 3.9b}%
|f_m(z)-S_n(f_m)(z)|<\frac{\varepsilon_m}{2},\ \forall z,\ |z|\leq 1-\delta_m,\ \forall n\geq N_1(m),
\end{equation}%
and 
\begin{equation}\label{eq 3.9c}%
 |g_m(z)-S_n(g_m)(z)|<\frac{\varepsilon_m}{2},\ \forall z,\ |z|\leq 1-\delta_m,\ \forall n\geq N_2(m),
\end{equation}%
where $\delta_m\rightarrow 0$ as $m \rightarrow \infty$. Choose $N(m)=\max\{N_1(m),N_2(m)\}$ where (\ref{eq 3.9b}), (\ref{eq 3.9c}) hold true. Then
\begin{align*}%
\left | S_n(f_m)(z)\right| &= |f_m(z)-S_n(f_m)(z)-f_m(z)|\leq |f_m(z)-S_n(f_m)(z)|+|f_m(z)|\\
&<\frac{\varepsilon_m}{2}+|f_m(z)|,
\end{align*}%
and 
\begin{align*}%
\left | S_n(g_m)(z)\right| &= |g_m(z)-S_n(g_m)(z)-g_m(z)|\geq |g_m(z)|-|g_m(z)-S_n(g_m)(z)|\\
&>|g_m(z)|-\frac{\varepsilon_m}{2}.
\end{align*}%
Combining last two inequalities, we obtain
\begin{eqnarray*}%
|S_n(f_m)(z)|-|S_n(g_m)(z)|< \frac{\varepsilon_m}{2}+|f_m(z)|- |g_m(z)|+\frac{\varepsilon_m}{2}<0,
\end{eqnarray*}%
for all $z$ such that $c_m+\delta_m\leq |z|\leq 1-\delta_m$ using (\ref{eq 3.9f}). Therefore,
\begin{eqnarray*}%
|S_n(f_m)(z)|\leq |S_n(g_m)(z)|,\ \forall z,\ c_m+\delta_m\leq |z|\leq 1-\delta_m,
\end{eqnarray*}%
which therefore implies,
\begin{eqnarray*}%
|S_n (f_m)((1-\delta_m)z)|\leq |S_n (g_m)((1-\delta_m)z)|,\ \forall z,\ \frac{c_m+\delta_m}{1-\delta_m} \leq |z|\leq 1.
\end{eqnarray*}%
For $n\geq N$, $\|S_n(f_m)\|_2=\|S_n(g_m)\|_2=1$. We follow the same steps of Lemma \ref{lem-triple} by replacing $f$ and $g$ by $S_n(f_m)$ and $S_n(g_m)$ respectively. We choose $r=1-\delta_m$ where $c<r<1$. Also $\|(\widetilde{S}_n(f_m))_r\|_2=\|(\widetilde{S}_n(g_m))_r\|_2=1$ and hence $((\widetilde{S}_n(f_m))_r, (\widetilde{S}_n(g_m))_r)$ is admissible and are polynomials. Therefore, $F_n((c_m+\delta_m)/(1-\delta_m))<1$. Also, $(c_m+\delta_m)/(1-\delta_m) \to \kappa$ as $m\to\infty$, that is, $\kappa_n\to\kappa$ as $n\to\infty$.
 \end{proof}%


\subsection{Polynomials of Degree 1}\label{subsec:polF(c)1}%
\setcounter{equation}{0}  %

We recall from Section $1$ that for the wider class of functions, Hayman's example provides an upper bound for $\kappa$, that is $\kappa<1/\sqrt{2}$. In this section, we claim that $1/\sqrt{2}$ is the sharp constant for the class of polynomials of degree $1$. Let $(p,q,z_0)$ be an extremal triple for $F_1(c)$ with $0<c<1$ such that $F_1(c)<1$. Without loss of generality, we may assume that
$p(z)=\alpha+\beta e^{it}z$, $q(z)=\gamma+\delta z$ where $\alpha\geq0$, $\beta\geq 0$, $\gamma\geq 0$, $\delta\geq 0$. Then the quotient is a M\"{o}bius map:
\begin{equation}\label{deg1-1.2}%
\varphi_t(z)=\frac{\alpha+\beta e^{it}z}{\gamma+\delta z}.
\end{equation}%
Since $\max_{c\leq |z|\leq 1} |\varphi_t(z)|=|\varphi_t(z_0)|$, where $|z_0|=c$, it follows that the pole of $\varphi_t$ should be in the disk $|z|<c$. Therefore, we have $0\leq \gamma/\delta<c$.
Since $\varphi_t$ is M\"{o}bius with pole at $z=-\gamma/\delta$. It maps $|z|=c$ onto a circle centered at the point $\varphi_t (-\delta c^2/\gamma)$ which is 
\begin{equation}\label{deg1-1.4}%
\varphi_t (-\delta c^2/\gamma)=\frac{\alpha-\frac{\beta \delta c^2}{\gamma}e^{it}}{\gamma-\frac{\delta^2 c^2}{\gamma}}=\frac{\alpha \gamma -\beta \delta c^2 e^{it}}{\gamma^2-\delta^2c^2}.
\end{equation}%
The radius $R$ of the image circle $\varphi(|z|=c)$ can be found as follows:
\begin{equation}\label{deg1-1.5}%
 R=\left | \varphi_t(c)-\varphi_t(-\delta c^2/\gamma)\right|=\left | \frac{\alpha+\beta c e^{it}}{\gamma +\delta c}-\frac{\alpha \gamma -\beta \delta c^2 e^{it}}{\gamma^2-\delta^2c^2} \right|=\frac{c}{\delta^2c^2-\gamma^2} \left| \alpha \delta-\beta \gamma e^{it}\right|.
\end{equation}%
It follows from (\ref{deg1-1.4}) and (\ref{deg1-1.5}) that the furthest point of the circle $\varphi(|z|=c)$ has modulus
\begin{equation}\label{deg1-1.6}%
\left| \varphi_t ( -\delta c^2/\gamma)\right|+R=\frac{1}{\delta^2c^2-\gamma^2} \left( \left |\alpha \gamma-\beta \delta c^2 e^{it} \right|+c\left| \alpha \delta -\beta \gamma e^{it} \right| \right)
\end{equation}%
Since all the parameters $\alpha$, $\beta$, $\gamma$, $\delta$ and $c$ are non negative, it follows from (\ref{deg1-1.6}) that, for fixed $\alpha$, $\beta$, $\gamma$, $\delta$ and $c$, the distance (\ref{deg1-1.6}) is smallest when $t=0$. In the latter case we have:
\begin{equation}\label{deg1-1.7}%
\varphi_0(z)=\frac{\alpha+\beta z}{\gamma+\delta z}
\end{equation}%
Since the substitution $z \mapsto e^{i\theta}z$ does not change the maximum of $|\varphi_0(z)|$ over the circle $|z|=r$ and since $\beta=\sqrt{2(1-\alpha^2)}$, $\delta=\sqrt{2(1-\gamma^2}$, we can change $\varphi_0(z)$  to the following form $\varphi(z)$, which will be more convenient for our purposes.
\begin{eqnarray*}%
\varphi(z)=\frac{\sqrt{1+2b^2}}{\sqrt{1+2a^2}}\, \frac{z-a}{z-b},
\end{eqnarray*}%
where
\begin{eqnarray*}%
a=\frac{\alpha}{\beta}=\frac{\alpha}{\sqrt{2(1-\alpha^2)}},\quad b=\frac{\gamma}{\delta}=\frac{\gamma}{\sqrt{2(1-\gamma^2)}}.
\end{eqnarray*}%
We note that the maximum of $\left| \varphi(ce^{it})\right|$ for $0\leq t\leq 2\pi$ may occur when $t=0$ or when $t=\pi$ depending on parameters $a$ and $b$. We consider the following cases:

\begin{figure}%
\centering %
\hspace{-4.3cm} %
\begin{minipage}{0.65\linewidth}%
\begin{tikzpicture}%
    [inner sep=1mm,
     place/.style={circle,draw=blue!50,
     fill=blue!20,thick},
		 scale=2.0]%
\draw [black, very thick, fill=gray!30!white](1,0) circle(1.0);%
\draw [black, very thick, fill=white](1,0) circle(0.55);%
\draw [black,dashed] (-0.1,0)--(2.5,0);%
\draw [<-, >=stealth, red, very thick] (1.2,0) to (1.48,0);%
\draw [->,>=stealth,black, very thick] (2.7,0) to (3.2,0);%
   
\node at (2.9,0) [above] {\small{$\varphi$}};%
\node at (1,0) [] {\tiny{$\bullet$}}; %
\node at (1.0,-0.01) [below] {\small{$0$}}; %
\node at (1.2,0) [] {\tiny{$\bullet$}};%
\node at (1.2,-0.01) [below] {\small{$a$}}; %
\node at (1.48,0) [] {\tiny{$\bullet$}}; %
\node at (1.48,-0.01) [below] {\small{$b$}}; %
\node at (1.55,0) [] {\tiny{$\bullet$}}; %
\node at (1.55,-0.1) [right] {\small{$c$}}; %
\node at (2,0) [] {\tiny{$\bullet$}}; %
\node at (2,-0.1) [right] {\small{$1$}}; %

\draw [black, very thick, fill=gray!30!white](4.9,0) circle(1.0);%
\draw [black, very thick, fill=white](4.7,0) circle(0.3);%
\draw [black,dashed] (3.4,0)--(6.0,0);%
\draw [->, >=stealth, red, very thick] (3.4,0) to (3.8,0);%
\draw [blue, very thick] (5.9,0) circle (2pt);%

\node at (3.8,0) [] {\tiny{$\bullet$}}; %
\node at (3.8,-0.01) [below] {\small{$0$}}; %
\node at (3.9,0) [] {\tiny{$\bullet$}}; %
\node at (3.9,-0.1) [right] {\small{$\varphi(-c)$}}; %
\node at (5.9,0) [] {\tiny{$\bullet$}}; %
\node at (5.9,-0.1) [left] {\small{$\varphi(c)$}}; %
\end{tikzpicture}%
\end{minipage}%
\caption{Case (1).}%
\label{fig:1}%
\end{figure}%

\noindent ($1$). If $0\leq a<b<c<1$, then $\max_{0\leq t\leq 2\pi} |\varphi(ce^{it})|=\varphi(c)$; see Figure \ref{fig:1}. We have,
\begin{eqnarray*}%
\varphi(c)=\frac{\sqrt{1+2b^2}}{\sqrt{1+2a^2}}\, \frac{c-a}{c-b}.
\end{eqnarray*}%
We minimize $\varphi(c)$ with respect to variable $a$ on the interval $0\leq a<b$. Differentiating, we find
\begin{eqnarray*}%
\frac{\partial}{\partial a} \left( \frac{c-a}{\sqrt{1+2a^2}}\right)=-\frac{1+2ac}{(1+2a^2)^{3/2}}<0.
\end{eqnarray*}%
Therefore, the minimal value of $\varphi(c)$ will occur when $a=b$, which gives $\varphi(c)=1$. Thus, the quotient $p(z)/q(z)$ does not give a non-trivial value for $F_1(c)$ in this case.\\
\noindent ($2$). If $0<b<a<c<1$, then $\max_{0\leq t\leq 2\pi} |\varphi(ce^{it})|=\varphi(-c)$; see Figure \ref{fig:2}.\\
In this case, we have 
\begin{equation}\label{deg1-1.9}%
\varphi(-c)=\frac{\sqrt{1+2b^2}}{\sqrt{1+2a^2}}\, \frac{c+a}{c+b}.
\end{equation}%
Differentiating with respect to $a$, we find
\begin{eqnarray*}%
\frac{\partial}{\partial a} \left( \frac{c+a}{\sqrt{1+2a^2}}\right)=\frac{1-2ac}{(1+2a^2)^{3/2}}.
\end{eqnarray*}%
The latter gives one critical point $a=1/(2c)$. This point will be in the required interval if $1/(2c)<c$ or $c>1/\sqrt{2}$. Calculating the values of $\varphi(-c)$ for $a=b$, $a=c$ and $a=1/(2c)$, we find\\
\noindent (i) If $a=b$, then $\varphi(-c)=1$.

\begin{figure}%
\centering %
\hspace{-4.3cm} %
\begin{minipage}{0.70\linewidth}%
\begin{tikzpicture}%
    [inner sep=1mm,
     place/.style={circle,draw=blue!50,
     fill=blue!20,thick},
     scale=2.0]%
\draw [black, very thick, fill=gray!30!white](1,0) circle(1.0);%
\draw [black, very thick, fill=white](1,0) circle(0.55);%
\draw [black,dashed] (-0.1,0)--(2.5,0);%
\draw [->, >=stealth, red, very thick] (1.2,0) to (1.48,0);%
\draw [->,>=stealth,black, thick] (2.7,0) to (3.2,0);%
   
\node at (2.9,0) [above] {\small{$\varphi$}};%
\node at (1,0) [] {\tiny{$\bullet$}}; %
\node at (1.0,-0.01) [below] {\small{$0$}}; %
\node at (1.2,0) [] {\tiny{$\bullet$}};%
\node at (1.2,-0.01) [below] {\small{$b$}}; %
\node at (1.48,0) [] {\tiny{$\bullet$}}; %
\node at (1.48,-0.01) [below] {\small{$a$}}; %
\node at (1.55,0) [] {\tiny{$\bullet$}}; %
\node at (1.55,-0.1) [right] {\small{$c$}}; %
\node at (2,0) [] {\tiny{$\bullet$}}; %
\node at (2,-0.1) [right] {\small{$1$}}; %

\draw [black, very thick, fill=gray!30!white](4.9,0) circle(1.0);%
\draw [black, very thick, fill=white](4.7,0) circle(0.3);%
\draw [black,dashed] (3.4,0)--(6.0,0);%
\draw [->, >=stealth, red, very thick] (3.4,0) to (3.8,0);%
\draw [blue, very thick] (5.9,0) circle (2pt);%

\node at (3.8,0) [] {\tiny{$\bullet$}}; %
\node at (3.8,-0.01) [below] {\small{$0$}}; %
\node at (3.9,0) [] {\tiny{$\bullet$}}; %
\node at (3.9,-0.1) [right] {\small{$\varphi(c)$}}; %
\node at (5.9,0) [] {\tiny{$\bullet$}}; %
\node at (5.9,-0.1) [left] {\small{$\varphi(-c)$}}; %
\end{tikzpicture}%
\end{minipage}%

\caption{Case (2).}%
\label{fig:2}%
\end{figure}%

\noindent
(ii) If $a=c$, then
\begin{eqnarray*}%
\varphi(-c)=\frac{2c}{\sqrt{1+2c^2}}\, \frac{\sqrt{1+2b^2}}{c+b}.
\end{eqnarray*}%
Differentiating with respect to $b$, we obtain
\begin{eqnarray*}%
\frac{\partial}{\partial b} \left(\frac{\sqrt{1+2b^2}}{c+b} \right)=\frac{2bc-1}{\sqrt{1+2b^2} (c+b)^2}.
\end{eqnarray*}%
The only critical point in this case is $b=1/(2c)$. Since $b<c$, we have $1/(2c)<c$ or $c>1/\sqrt{2}$. Calculating values of $\varphi(-c)$ for $b=0$, $b=c$ and $b=1/(2c)$, we find
\begin{itemize}%
\item If $b=0$, then $\varphi(-c)=2/\sqrt{1+2c^2}>1$.
\item If $b=c$, then $\varphi(-c)=1$.
\item If $b=1/(2c)$, then
\begin{equation}\label{deg1-1.9a}%
\varphi(-c)=\frac{2\sqrt{2}c}{1+2c^2}<1\quad {\rm for}\quad \frac{1}{\sqrt{2}}<c<1.
\end{equation}%
\end{itemize}%
\noindent
(iii) If $a=1/(2c)$, then
\begin{equation}\label{deg1-1.9b}%
\varphi(-c)=\frac{\sqrt{1+2c^2}}{\sqrt{2}}\, \frac{\sqrt{1+2b^2}}{c+b}.
\end{equation}%
Thus, $b=1/(2c)$ is a critical point as in case (ii). Calculating values at $b=0$, $b=c$ and $b=1/(2c)$ we find
\begin{itemize}%
	\item If $b=0$, then $\varphi(-c)=\sqrt{1+2c^2}/(\sqrt{2}c)> \sqrt{3/2}>1$.
	\item If $b=c$, then $\varphi(-c)=(1+2c^2)/(2\sqrt{2}c)\geq (1+2(1/\sqrt{2})^2)/(2\sqrt{2}(1/\sqrt{2})=1$.
	\item If $b=1/(2c)$, then $c>1/\sqrt{2}$ and
	\begin{eqnarray*}
	\varphi(-c)=\frac{\sqrt{1+2c^2}}{\sqrt{2}}\, \frac{\sqrt{1+2\left(\frac{1}{4c^2}\right)}}{c+\frac{1}{2c}}=1.
	\end{eqnarray*}
	Thus, in case (iii) $\varphi(-c)$ does not give a non-trivial bound.
\end{itemize}%

\begin{figure}%
\centering %
\hspace{-4.3cm} %
\begin{minipage}{0.70\linewidth}%
\begin{tikzpicture}%
    [inner sep=1mm,
     place/.style={circle,draw=blue!50,
     fill=blue!20,thick},
     scale=2.0]%
\draw [black, very thick, fill=gray!30!white](1,0) circle(1.0);%
\draw [black, very thick, fill=white](1,0) circle(0.55);%
\draw [black,dashed] (-0.1,0)--(2.5,0);%
\draw [->, >=stealth, red, very thick] (1.2,0) to (1.65,0);%
\draw [->,>=stealth,black, thick] (2.7,0) to (3.2,0);%
   
\node at (2.9,0) [above] {\small{$\varphi$}};%
\node at (1,0) [] {\tiny{$\bullet$}}; %
\node at (1.0,-0.01) [below] {\small{$0$}}; %
\node at (1.2,0) [] {\tiny{$\bullet$}};%
\node at (1.2,-0.01) [below] {\small{$b$}}; %
\node at (1.55,0) [] {\tiny{$\bullet$}}; %
\node at (1.55,-0.1) [left] {\small{$c$}}; %
\node at (1.65,0) [] {\tiny{$\bullet$}}; %
\node at (1.65,-0.01) [below] {\small{$a$}}; %
\node at (1.8,0) [] {\tiny{$\bullet$}}; %
\node at (1.8,-0.01) [below] {\small{$\frac{c^2}{b}$}}; %
\node at (2,0) [] {\tiny{$\bullet$}}; %
\node at (2,-0.1) [right] {\small{$1$}}; %

\draw [black, very thick, fill=gray!30!white](4.9,0) circle(1.0);%
\draw [black, very thick, fill=white](5.11,0) circle(0.3);%
\draw [black,dashed] (3.4,0)--(6.0,0);%
\draw [->, >=stealth, red, very thick] (3.4,0) to (4.1,0);%
\draw [blue, very thick] (5.9,0) circle (2pt);%

\node at (3.9,0) [] {\tiny{$\bullet$}}; %
\node at (3.9,-0.1) [left] {\small{$\varphi(c)$}}; %
\node at (4.1,0) [] {\tiny{$\bullet$}}; %
\node at (4.1,-0.01) [below] {\small{$0$}}; %
\node at (4.5,0) [] {\tiny{$\bullet$}}; %
\node at (4.5,-0.01) [below] {\small{$\varphi (c^2/b)$}}; %
\node at (5.9,0) [] {\tiny{$\bullet$}}; %
\node at (5.9,-0.1) [left] {\small{$\varphi(-c)$}}; %
\end{tikzpicture}%
\end{minipage}%
\caption{Case (3).}%
\label{fig:3}%
\end{figure}%

\noindent ($3$). If $0<b<c<a<c^2/b$, then $\max_{0\leq t\leq 2\pi} |\varphi(ce^{it})|=\varphi(-c)$; see Figure \ref{fig:3}. In this case $\varphi(-c)$ is given by (\ref{deg1-1.9}) and the critical point (when $\varphi(-c)$ is considered as a function of $a$) is $a=1/(2c)$, $c<1/\sqrt{2}$. We again consider cases:\\
\noindent
(i) If $a=c$, then \begin{eqnarray*}
\varphi(-c)=\frac{2c}{\sqrt{1+2c^2}}\, \frac{1+2b^2}{c+b}.
\end{eqnarray*}
 As before, we have one critical point $b=1/(2c)$ if $c>1/\sqrt{2}$.
\begin{itemize}%
	\item For $b=0$, $\varphi(-c)=2/\sqrt{1+2c^2}>1$.
	\item For $b=1/(2c)$, $\varphi(-c)=(2\sqrt{2}c)/(1+2c^2)<1$ for $1/\sqrt{2}\leq c<1$.
	\item For $b=c$, $\varphi(-c)=1$.
\end{itemize}%
 Thus, $\varphi(-c)$ does not give a non-trivial solution.\\
\noindent 
(ii) If $a=1/(2c)$, then $\varphi(-c)$ is given by (\ref{deg1-1.9b}). As before, the only critical point is $b=1/(2c)$. But this is in the required interval if $b=1/(2c)$ or $c>1/\sqrt{2}$ and hence we cannot consider $b=1/(2c)$ as a critical point. We consider the cases $b=0$ and $b=c$ as done in Case ($2$), part (iii). Thus, $\varphi(-c)$ does not give a non-trivial solution.\\
\noindent
(iii) For $a=c^2/b$, 
\begin{equation}\label{deg1-1.10}%
\varphi(-c)=\frac{c\sqrt{1+2b^2}}{\sqrt{b^2+2c^4}}.
\end{equation}%
 Differentiating, we find
\begin{equation}\label{deg1-1.11}%
\frac{\partial}{\partial b} \sqrt{\frac{1+2b^2}{b^2+2c^4}}=\frac{1}{2} \sqrt{\frac{b^2+2c^4}{1+2b^2}} \frac{2b(4c^4-1)}{(b^2+2c^4)^2}.
\end{equation}%
If $0<c<1/\sqrt{2}$, then the derivative in (\ref{deg1-1.11}) is negative and therefore the minimum occurs at $b=c$ in this case: If $b=c$, $\varphi(-c)=1$ and there is no non-trivial solution. If $c>1/\sqrt{2}$, then the derivative in (\ref{deg1-1.11}) is positive and therefore the minimum occurs at $b=0$. If $b=0$, then 
\begin{equation}\label{deg1-1.11a}%
\varphi(-c)=\frac{1}{\sqrt{2}c},\quad \frac{1}{\sqrt{2}}<c\leq 1.
\end{equation}%

\begin{figure}%
\centering %
\hspace{-4.3cm} %
\begin{minipage}{0.70\linewidth}%
\begin{tikzpicture}%
    [inner sep=1mm,
     place/.style={circle,draw=blue!50,
     fill=blue!20,thick},
     scale=2.0]%
		
\draw [black, very thick, fill=gray!30!white](1,0) circle(1.0);%
\draw [black, very thick, fill=white](1,0) circle(0.55);%
\draw [black,dashed] (-0.1,0)--(2.5,0);%
\draw [->, >=stealth, red, very thick] (1.2,0) to (1.8,0);%
\draw [->,>=stealth,black, thick] (2.7,0) to (3.2,0);%
   
\node at (2.9,0) [above] {\small{$\varphi$}};%
\node at (1,0) [] {\tiny{$\bullet$}}; %
\node at (1.0,-0.01) [below] {\small{$0$}}; %
\node at (1.2,0) [] {\tiny{$\bullet$}};%
\node at (1.2,-0.01) [below] {\small{$b$}}; %
\node at (1.55,0) [] {\tiny{$\bullet$}}; %
\node at (1.55,-0.1) [left] {\small{$c$}}; %
\node at (1.65,0) [] {\tiny{$\bullet$}}; %
\node at (1.65,-0.01) [below] {\small{$\frac{c^2}{b}$}}; %
\node at (1.8,0) [] {\tiny{$\bullet$}}; %
\node at (1.8,-0.01) [below] {\small{$a$}}; %
\node at (2,0) [] {\tiny{$\bullet$}}; %
\node at (2,-0.1) [right] {\small{$1$}}; %

\draw [black, very thick, fill=gray!30!white](4.9,0) circle(1.0);%
\draw [black, very thick, fill=white](5.11,0) circle(0.3);%
\draw [black,dashed] (3.4,0)--(6.0,0);%
\draw [->, >=stealth, red, very thick] (3.4,0) to (4.5,0);%
\draw [blue, very thick] (3.9,0) circle (2pt);%

\node at (3.9,0) [] {\tiny{$\bullet$}}; %
\node at (3.9,-0.1) [left] {\small{$\varphi(c)$}}; %
\node at (4.1,0) [] {\tiny{$\bullet$}}; %
\node at (4.18,-0.02) [below] {\small{$\varphi (c^2/b)$}}; %
\node at (4.5,0) [] {\tiny{$\bullet$}}; %
\node at (4.5,-0.01) [below] {\small{$0$}}; %
\node at (5.9,0) [] {\tiny{$\bullet$}}; %
\node at (5.9,-0.1) [left] {\small{$\varphi(-c)$}}; %
\end{tikzpicture}%
\end{minipage}%
\caption{Case (4).}%
\label{fig:4}%
\end{figure}%

\noindent ($4$). If $0<b<c<c^2/b<a$, then $\max_{0\leq t\leq 2\pi} |\varphi(ce^{it})|=-\varphi(c)$; see Figure \ref{fig:4}. In this case $-\varphi(c)$ is given by the formula:
\begin{eqnarray}\label{deg1-1.8}%
-\varphi(c)=\frac{\sqrt{1+2b^2}}{\sqrt{1+2a^2}}\, \frac{a-c}{c-b}.
\end{eqnarray}%
Differentiating, we obtain
\begin{eqnarray*}%
\frac{\partial}{\partial a} \left( \frac{a-c}{\sqrt{1+2a^2}}\right)=\frac{1+2ac}{(1+2a^2)^{3/2}}>0.
\end{eqnarray*}%
Therefore the minimal value will occur when $a=c^2/b$. Hence,
\begin{equation}\label{deg1-1.12}%
 -\varphi(c)=\frac{c\sqrt{1+2b^2}}{\sqrt{b^2+2c^4}}. 
\end{equation}%
(\ref{deg1-1.12}) gives the same value as (\ref{deg1-1.10}). Thus this case gives a non-trivial solution (\ref{deg1-1.11a}). Combining our findings, we conclude that a non-trivial solution exists if and only if 
\begin{equation}\label{deg1-1.13}%
\frac{1}{\sqrt{2}}<c\leq 1.
\end{equation}%
Furthermore, this non-trivial solution is the minimum of solutions given by (\ref{deg1-1.9a}) and (\ref{deg1-1.11a}). Since
\begin{eqnarray*}%
 \frac{2\sqrt{2}c}{1+2c^2}-\frac{1}{\sqrt{2}c}=\frac{2c^2-1}{\sqrt{2}c(1+2c^2)}>0\quad {\rm for}\quad \frac{1}{\sqrt{2}}<c\leq 1,
\end{eqnarray*}%
it follows that the minimal solution is 
\begin{equation}\label{deg1-1.14}%
  F_1(c)= \left\{
  \begin{array}{l l}
    1 & \quad {\rm if}\quad 0\leq c\leq \frac{1}{\sqrt{2}} \\
    [ 2 mm]
    \frac{1}{\sqrt{2}c} & \quad {\rm if}\quad \frac{1}{\sqrt{2}} < c \leq 1. \\
  \end{array} \right.
\end{equation}%
see its graph in Figure \ref{fig:5}. To find an extremal pair of polynomials, we take the limit in (\ref{deg1-1.8}) as $b\to 0^+$ and $a=c^2/b \to +\infty$, then we find the limiting function:
\begin{eqnarray*}%
\varphi_{\infty}(z)=-\frac{1}{\sqrt{2}z}.
\end{eqnarray*}%
Taking $p(z)=1$ and $q(z)=\sqrt{2}z$, we obtain an admissible pair $(p,q)=(1,\sqrt{2}z)$, which satisfies (\ref{deg1-1.14}) and therefore it is an extremal pair and it is unique up to a factor $e^{it}$, for some $t\in\mathbb{R}$.
\par
It is interesting to note that the extremal pair does not depend on $c$. We are wondering if the same phenomenon occurs for all degrees $n$ and for Korenblum's problem for analytic functions.

\begin{figure}%
\centering %
\hspace{-4.3cm} %
\begin{minipage}{0.23\linewidth}%
\begin{tikzpicture}
    [inner sep=1mm,
     place/.style={circle,draw=blue!50,
     fill=blue!20,thick},
     scale=4.0]%

\draw [->,>=stealth,black] (0,-0.5) to (0,1.5);%
 \draw [->,>=stealth, black] (-0.5,0)to (1.5,0);%
	\node at (1.5,0) [right] {$c$}; %
	
	 \draw[black, very thick] (0,1)--(0.707,1);%
    \draw [red, very thick] plot[domain=0.707:1] ({\x},{1/(sqrt(2)*\x)});%
		\draw [blue, very thick] plot[domain=0:1,samples=2000] ({\x},{sqrt(1-\x^2)});%
		
     \draw [black, dashed] plot[domain=0.5:0.707] ({\x},{1/(sqrt(2)*\x)});%
     \draw [black, dashed] (0.707,1)--(1,1);%
     \draw [black, dashed] (1,1)--(1,0);%
     \draw [black, dashed] (0.707,1)--(0.707,0);%
		 \draw [black, dashed] (0,0.707)--(1,0.707);%
		
     \node at (0,0) [] {\small{$\bullet$}}; %
		 \node at (0,-0.06) [left] {$0$}; %
     \node at (0.707,0) [] {\small{$\bullet$}}; %
		\node at (0.707,-0.04) [below] {$\frac{1}{\sqrt{2}}$}; %
		\node at (0.33,0) [] {\small{$\bullet$}}; %
		\node at (0.33,-0.04) [below] {\textcolor{magenta}{$\kappa$}}; %
     \node at (1,0) [] {\small{$\bullet$}}; %
		\node at (1,-0.04) [below] {$1$}; %
     \node at (0,1) [] {\small{$\bullet$}}; %
		\node at (0,1) [left] {$1$}; %
     \node at (1,0.707) [] {\small{$\bullet$}}; %
		\node at (0,0.707) [] {\small{$\bullet$}}; %
		\node at (0,0.707) [left] {$\frac{1}{\sqrt{2}}$}; %
		\node at (0.707,1) [] {\small{$\bullet$}}; %
		\node at (0.707, 1.2) [right] {\small{$F_1(c)=\frac{1}{\sqrt{2}c}$}};%
		\node at (0.62,0.62) [left] {\small{$\sqrt{1-c^2}$}};%
		\node at (0.707,0.707) [] {\small{$\bullet$}}; %
 \end{tikzpicture}%
\end{minipage}%
\caption{Graph of $F_1(c)$ and lower bound of $F(c)$}%
\label{fig:5}%
\end{figure}
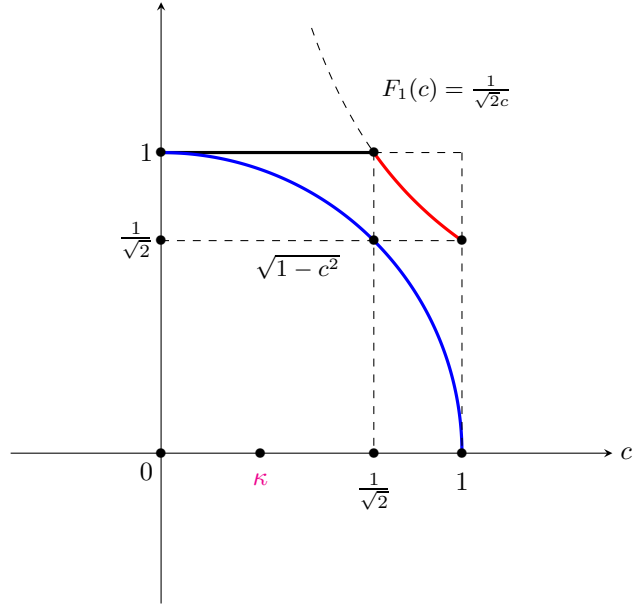%
 

\subsection{Bounded Functions and Blaschke Products}%
\setcounter{equation}{0}  %

We define the following problem for bounded functions:
 \begin{problem}%
 Given $c$, $0<c<1$, find
\begin{equation}\label{eq 3.5a}%
F^b(c)= \inf \limits_{f,g\in S^1 \cap \mathcal{H}^{\infty}} \sup \limits_{c\leq |z|<1} \left |\frac{f(z)}{g(z)} \right |
\end{equation}%
 \end{problem}%
 Note that, $0<F^b(c)\leq 1$. Further, $\kappa^b$ is Korenblum's constant for bounded functions such that $F^b(c)=1$ for $0<c\leq \kappa^b$ and $ F_n(c)<1$ for $\kappa^b<c<1$. Since  $\mathcal{H}^{\infty}\subseteq \mathcal{A}^2(\mathbb{D})$, then $\kappa\leq \kappa^b$. It is interesting to see that the extremal pair of functions are bounded.

\begin{theorem}%
 $\kappa=\kappa^b$.
 \end{theorem}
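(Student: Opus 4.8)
The plan is to prove the reverse inequality $\kappa^b\le\kappa$; since $\kappa\le\kappa^b$ has already been observed (it follows from $\mathcal{H}^\infty\subseteq\mathcal{A}^2(\mathbb{D})$, the infimum defining $F^b(c)$ being taken over a smaller family than the one defining $F(c)$), this will give $\kappa=\kappa^b$. Concretely, I would show that every $c$ with $\kappa<c<1$ satisfies $c\ge\kappa^b$: this forces $\kappa^b\le\inf\{c:\kappa<c<1\}=\kappa$, and the theorem follows. The idea is to take an extremal pair for $F(c)$ — which exists by Proposition \ref{prop 1} — and replace it by its dilations $f_\rho(z)=f(\rho z)$, $g_\rho(z)=g(\rho z)$, which are analytic on a neighborhood of $\overline{\mathbb{D}}$ and hence bounded by Lemma \ref{lem-bounded}. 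Dilation, however, only preserves the pointwise inequality $|f|\le|g|$ on the \emph{smaller} annulus $A(c/\rho,1)$, so the argument must end with the limit $\rho\to1^-$ to recover the exact constant.

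In detail, I would fix $c\in(\kappa,1)$ so that $F(c)<1$, take an extremal pair $(f,g)\in S^1$ with $\sup_{c\le|z|<1}|f/g|=F(c)<1$, and note (exactly as in Case $2$ of the proof of Proposition \ref{prop 1}) that the zeros of $g$ in $A(c,1)$ are cancelled by zeros of $f$, so $\omega=f/g$ is analytic and bounded by $F(c)$ on $A(c,1)$ and $|f(z)|\le F(c)|g(z)|$ for $c\le|z|<1$. For $\rho<1$ close to $1$ (so that $c/\rho<1$), the dilations $f_\rho,g_\rho$ lie in $\mathcal{H}^\infty$, are not identically zero, and, after normalizing, $\widehat{f}_\rho:=f_\rho/\|f_\rho\|_2$ and $\widehat{g}_\rho:=g_\rho/\|g_\rho\|_2$ lie in $S^1\cap\mathcal{H}^\infty$. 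Since $c/\rho\le|z|<1$ implies $c\le|\rho z|<1$, one has $|f_\rho(z)|\le F(c)|g_\rho(z)|$ on $A(c/\rho,1)$, whence
\[
\sup_{c/\rho\le|z|<1}\left|\frac{\widehat{f}_\rho(z)}{\widehat{g}_\rho(z)}\right|
=\frac{\|g_\rho\|_2}{\|f_\rho\|_2}\sup_{c/\rho\le|z|<1}\left|\frac{f_\rho(z)}{g_\rho(z)}\right|
\le\frac{\|g_\rho\|_2}{\|f_\rho\|_2}\,F(c).
\]
By the $L^2$-continuity of dilations (the last lemma of Section $3$ with $p=2$), $\|f_\rho\|_2\to\|f\|_2=1$ and $\|g_\rho\|_2\to\|g\|_2=1$ as $\rho\to1^-$, so the right-hand side tends to $F(c)<1$. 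Hence for $\rho$ sufficiently close to $1$ the admissible pair $(\widehat{f}_\rho,\widehat{g}_\rho)$ satisfies $\sup_{c/\rho\le|z|<1}|\widehat{f}_\rho/\widehat{g}_\rho|<1$, i.e. $F^b(c/\rho)<1$, and therefore $c/\rho>\kappa^b$. Letting $\rho\to1^-$ gives $c\ge\kappa^b$, and since $c\in(\kappa,1)$ was arbitrary we conclude $\kappa\ge\kappa^b$, hence $\kappa=\kappa^b$.

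The step I expect to be the main obstacle — or rather the point demanding the most care — is the annulus shrinkage: because the inequality survives dilation only on $A(c/\rho,1)$, one cannot produce a bounded admissible pair for $F^b(c)$ directly, and the whole conclusion hinges on the limit $\rho\to1^-$ together with the characterization of $\kappa^b$. Two subsidiary points must be checked carefully: that the normalization factor $\|g_\rho\|_2/\|f_\rho\|_2$ stays below $1/F(c)$ for $\rho$ near $1$ (this is precisely where $L^2$-continuity of dilations enters), and the removable-singularity bookkeeping guaranteeing that $f_\rho/g_\rho$ is genuinely bounded by $F(c)$ on the closed annulus $c/\rho\le|z|<1$ — a zero of $g_\rho$ there must be matched by a zero of $f_\rho$ of at least equal order, which follows from $|f_\rho|\le F(c)|g_\rho|$.
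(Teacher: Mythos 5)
Your proposal is correct and follows essentially the same route as the paper: dilate the extremal pair for $F(c)$ to obtain bounded functions, renormalize to unit norm as in Lemma \ref{lem-triple}, observe that the resulting pair is admissible for the bounded problem on the slightly smaller annulus $c/\rho\le|z|<1$, and pass to the limit $\rho\to1^-$ (and $c\to\kappa^+$). Your write-up is in fact somewhat more explicit than the paper's about why the normalized pair witnesses $F^b(c/\rho)<1$, but the underlying argument is the same.
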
%
 
 \begin{proof}%
 Clearly, $\kappa \leq \kappa^b$. Suppose that $c_n\rightarrow\kappa$ and $c_n<c_{n+1}$. Let $(f_n,g_n)$ be an extremal pair for $F(c_n)$. Then $\|f_n\|_2=\|g_n\|_2=1$ and
 \begin{eqnarray*}%
F(c_n)=\sup \limits_{c_n\leq |z|<1} \left| \frac{f_n(z)}{g_n(z)}\right|\leq 1.
\end{eqnarray*}%
 Therefore,
 \begin{eqnarray*}%
 |f_n(z)|\leq |g_n(z)|\ {\rm for\ all}\ z,\ c_n\leq|z|<1.
 \end{eqnarray*}%
 Note that, for every $n$, there is $\rho_n$, $0<\rho_n<1$, sufficiently close to $1$ such that $\rho_n \rightarrow 1$ as $n\rightarrow\infty$. Define $f_{\rho_n}(z)=f_n(\rho_n z)$, $g_{\rho_n}(z)=g_n(\rho_n z)$. Then
 \begin{eqnarray*}%
 |f_{\rho_n}(z)|\leq |g_{\rho_n}(z)|\ {\rm for\ all}\ z,\ \frac{c_n}{\rho_n}\leq |z|<1. 
\end{eqnarray*}%
Following the steps of Lemma \ref{lem-triple} by replacing $f$, $g$ by $f_n$, $g_n$ respectively and $r$ by $\rho_n$, it is easy to observe that $\|\widetilde{f}_{\rho_n}\|_2=\|\widetilde{g}_{\rho_n}\|_2=1$ and $\widetilde{f}_{\rho_n}$, $\widetilde{g}_{\rho_n}$ are bounded. Therefore, $(\widetilde{f}_{\rho_n}, \widetilde{g}_{\rho_n})$ is admissible for $F^b(c)$ . Since $c_n/\rho_n\to \kappa$ as $n\to\infty$, then $\kappa^b=\kappa$.
 \end{proof}%

For $n\geq 1$, consider the class $B_n$ of Blaschke products of order at most $n$. 
\begin{problem}%
Given $c$, $0<c<1$, find
\begin{equation}\label{eq 3.5b}%
F_n^B(c)= \inf \limits_{f,g\in S^1\cap B_n} \sup \limits_{c\leq |z|<1} \left |\frac{f(z)}{g(z)} \right |.
\end{equation}%
\end{problem}%

Note that, $0<F^B(c)\leq 1$. Further, $\kappa_n^B$ is Korenblum's constant for Blaschke products of order at most $n\geq 1$ such that $F_n^B(c)=1$ for $0<c\leq \kappa^B$ and $ F_n^B(c)<1$ for $\kappa^B<c<1$.  Since $B_n \subseteq B_{n+1}$, it follows that
\begin{eqnarray*}%
0<\kappa \leq \kappa_{n+1}^B \leq \kappa_n^B \leq \ldots \leq \kappa_2^B \leq \kappa_1^B \leq 1.
\end{eqnarray*}%
Following the proof of Lemma \ref{lem-limpol}, one can similarly show that $\kappa =\lim \limits_{n\to\infty} \kappa_n^B$.

\section{``Dual'' Problem \ref{prob 1}}
Given $c$, $0<c<1$, consider the following problem of finding
\begin{equation}\label{eq:prob4}%
F_-(c)=\inf \limits_{f,g\in S^1} \sup \limits_{|z|\leq c} \left | \frac{f(z)}{g(z)}\right |.
\end{equation}%
This problem is dual to the Problem \ref{prob 1} in a sense that the range $c\leq|z|<1$ is replaced by $|z|\leq c$. However, unlike Problem \ref{prob 1}, the solution to problem (\ref{eq:prob4}) is trivial. Precisely, we claim that $F_-(c)\equiv 0$ for $0<c<1$. Fix $0<c<1$ and $\varepsilon>0$, then there is $c<r<1$ and $n\geq 1$ such that $|f_n(z)|^2\leq \varepsilon$ for all $|z|\leq r$. Consider  
\begin{align*}%
 \frac{z^n-r^n}{1-r^n z^n} &= (z^n-r^n)\left [ 1-(rz)^n \right]^{-1}\\
 &= (z^n-r^n)(1+r^n z^n+r^{2n}z^{2n}+r^{3n} z^{3n}+\ldots)\\
 &=-r^n+(1-r^{2n})z^n+r^n(1-r^{2n})z^{2n}+r^{2n}(1-r^{2n})z^{3n}+\ldots\\
 &=-r^n+(1-r^{2n})(z^n+r^n z^{2n}+r^{2n}z^{3n}+\ldots)\\
 \end{align*}%
 Therefore, 
\begin{eqnarray*}%
\left \| \frac{z^n-r^n}{1-r^n z^n} \right \|_2^2=r^{2n}+(1-r^{2n})^2 \sum \limits_{k=1}^\infty \frac{1}{kn+1} r^{2n(k-1)}.
\end{eqnarray*}%
 Define 
\begin{eqnarray*}%
f_n(z) :=\frac{1}{\sqrt{r^{2n}+\frac{(1-r^{2n})^2}{n+1}+(1-r^{2n})^2 \sum \limits_{k=2}^\infty \frac{r^{2n(k-1)}}{kn+1} }}\, \frac{z^n-r^n}{1-r^n z^n},\quad g_n(z)=1.
\end{eqnarray*}%
Define, $\psi(z)=  \frac{z^n-r^n}{1-r^n z^n}$. Then $\psi(z)$ is analytic in the disk $|z|<r$. Therefore, Maximum Modulus Theorem implies that the maximum of $\psi(z)$ occurs on the circle $|z|=r$. Consider 
\begin{eqnarray*}%
|\psi(z)|^2=\frac{2 r^{2n} (1-\cos n\theta)}{1+r^{4n}-2 r^{2n}\cos n\theta}.
\end{eqnarray*}%
Then we want to find $\max \limits_{|z|=r} |\psi(z)|^2$. Note that, 
\begin{eqnarray*}%
\frac{\partial}{\partial \theta}(|\psi(re^{i\theta})|^2)=\frac{2nr^{2n}(1-r^{2n})^2\sin n\theta}{(1+r^{4n}-2 r^{2n}\cos n\theta)^2}.
\end{eqnarray*}%
Therefore, the critical points are located at $\theta=\frac{k\pi}{n}$. It is easy to see that maximum occurs at these points when $k$ is odd. Therefore, 
\begin{eqnarray*}%
\max \limits_{|z|=r} |\psi(z)|^2= \frac{4 r^{2n}}{(1+r^{2n})^2} \Rightarrow \left | \psi(z)\right|^2\leq \frac{4r^{2n}}{(1+r^{2n})^2}< 4r^{2n}.
\end{eqnarray*}%
Thus,
\begin{eqnarray*}%
|f_n(z)|^2 < \frac{4 r^{2n}}{r^{2n}+\frac{(1-r^{2n})^2}{n+1}+(1-r^{2n})^2 \sum \limits_{k=2}^\infty \frac{r^{2n(k-1)}}{kn+1}}= \frac{1}{1+\frac{(1-r^{2n})^2}{r^{2n}(n+1)}+\frac{(1-r^{2n})^2}{r^{2n}} \sum \limits_{k=2}^\infty \frac{r^{2n(k-1)}}{kn+1}}.
\end{eqnarray*}%
Here, $\sum \limits_{k=2}^{\infty} \frac{r^{2n(k-1)}}{kn+1}$ converges for $r<1$. Substituting, $r^2=x$ and $m=nk+1$, we obtain
\begin{eqnarray*}%
\sum \limits_{k=2}^{\infty} \frac{r^{2n(k-1)}}{kn+1}=x^{-(n+1)} \sum \limits_{k=2}^{\infty} \frac{x^m}{m}= r^{-2(n+1)} \left[ -\log(1-r^2)-\sum \limits_{k=1}^{2n} \frac{r^{2k}}{k} \right].
\end{eqnarray*}%
As $n\to\infty$, 
\begin{eqnarray*}%
\frac{(1-r^{2n})^2}{r^{2n}} \sum \limits_{k=2}^{\infty} \frac{r^{2n(k-1)}}{kn+1}= -\frac{(1-r^{2n})^2}{r^{4n+2}} \left[ \log(1-r^2)+\sum \limits_{k=1}^{2n} \frac{r^{2k}}{k} \right] \to +\infty,
\end{eqnarray*}%
and 
\begin{eqnarray*}%
\frac{(1-r^{2n})^2}{r^{2n}(n+1)}\to \frac{(1-0)^2}{0}=+\infty.
\end{eqnarray*}%
Therefore, $|f_n(z)|^2 \to 0$ as $n\to\infty$. 


\section{Further discussion and open questions}%
\setcounter{equation}{0}  %

Though the existence of an extremal pair for both the problems is shown by standard normal family arguments in Section $3$, the question of the uniqueness of the extremal function is still open. It would be interesting to see if an extremal pair $(f,g)$ is  unique for every $c$ in the non-trivial range $(\kappa,1)$ up to some rotation of the functions. Furthermore, a simpler version of the same question would be if $(f_1,g_1)$ and $(f_1,g_2)$ are two extremal pairs for $F(c)$, where $c\in (\kappa,1)$, then can we say $g_2(z)=e^{i\alpha}g_1(z)$ for some real $\alpha$? We have discussed a few of the analytic properties of the function $F(c)$ in Section $4$, we wonder if the function $F(c)$ is uniformly continuous, Lipschitz continuous and differentiable in $(\kappa,1)$.
\par
We obtained $\kappa_1$ for the class of linear polynomials. We can modify the extremal pair for linear polynomials to see if we can guess the extremal pair and in turn improve Korenblum's constant for higher degree polynomials. On that note, let us consider the following two examples based on Problem A.
\par
Consider $f(z)=1$, $g(z)=\sqrt{n}z^{n-1}$. Then $\|f\|_2=\|g\|_2=1$. Further, $\left | f(z)/g(z) \right|=\left| 1/(\sqrt{n}z^{n-1}) \right|<1$ if $|z|^{n-1}>1/\sqrt{n}$ in $c<|z|<1$. This implies that $c> \left(1/n\right)^{1/(2(n-1))}$. Note that the quantity on the right hand side of the last inequality is increasing with respect to $n$ for $n\geq 2$. Therefore, the assumed pair of polynomials does not provide better estimates for Korenblum's constant for polynomials of degree $n\geq 2$ and hence are not extremal. 
\par
Consider $f(z)=\sqrt{n-1}z^{n-2}$, $g(z)=\sqrt{n}z^{n-1}$. Then $\|f\|_2=\|g\|_2=1$. Further, $\left| f(z)/g(z) \right|=\left|\sqrt{(n-1)}/(\sqrt{n}z)\right|<1$ if $|z|>\sqrt{(n-1)/n}$ in $c<|z|<1$. This implies that $c>\sqrt{(n-1)/n}$. Again note that the quantity on the right hand side of the last inequality is increasing with respect to $n$ for $n\geq 2$. Therefore, the assumed pair of polynomials does not provide better estimates for Korenblum's constant for polynomials of degree $n\geq 2$ and hence are not extremal.
\par
Suppose $(p,q)$ is an extremal pair of functions for $F(c)$ in $\mathcal{A}^2(\mathbb{D})$ and $p(z)$, $q(z)$ have a common zero in $A(c,1)$. We conjecture that $p\equiv q$.
\par
One can define Problems A, B and C for $\mathcal{A}^p(\mathbb{D})$ for $p\geq 1$. Then we may ask the following question, for $0<p_1<p_2<\infty$, if $\kappa_{p_1}<\kappa_{p_2}$ or in particular the sequence $\{\kappa_p\}$ is a monotonic sequence for $p\geq 1$. The only fact known so far due to Hinkkanen \cite{H2} is that $\kappa_p\to 1$ as $p\to\infty$. We also wonder in the same direction  that for $0<p_1<p_2<\infty$ and $c \in (\max\{\kappa_{p_1},\kappa_{p_2}\},1)$ if the solution function $F_{p_1}(c)<F_{p_2}(c)$. For $0<p<\infty$ and $c\in(\max\{\kappa,\kappa_p\},1)$, the pair $\{ f_0,g_0\}$ is extremal for the problem
\begin{eqnarray*}
F(c):= \sup_{f,g\in FG(c)} \left( \|f\|_2^2-\|g\|_2^2 \right)
\end{eqnarray*}
Does this imply the pair $\{f_0^{2/p},g_0^{2/p}\}$ is extremal for the following problem?
\begin{eqnarray*}
F_p(c):= \sup_{f,g \in FG_p(c)} \left( \|f\|_p^p-\|g\|_p^p \right)
\end{eqnarray*}


%


\begin{thebibliography}{00}%


\bibitem{AShS1} D.~Aharonov, H.~S.~Shapiro, and A.~Yu.~Solynin, {\it A minimal area problem in conformal mapping}, J. Anal. Math., \textbf{78} (1999), 157--176.

\bibitem{AShS3} D.~Aharonov, H.~S.~Shapiro, and A.~Yu.~Solynin: {\it Minimal area problems for functions with integral representation}, J. Anal. Math., \textbf{98} (2006), 83--111.

\bibitem{B1} S.~Bergman, {\it The Kernel Function and Conformal Mapping}, Mathematical Surveys and Monographs, Number \textbf{5}, AMS.

\bibitem{DS} P.~Duren, A.~Schuster, {\it Bergman Spaces}, Mathematical Surveys and Monographs, Vol. \textbf{100}, AMS.  

\bibitem{D1} P.~Duren, {\it Theory of $\mathcal{H}^p$ spaces}, Dover Publications, Mineola, New York, (2000).

\bibitem{H1} W.~K.~Hayman, {\it On a Conjecture of Korenblum}, Analysis (Munich) \textbf{19} (1999), 195-205.

 \bibitem{HD} W.~K.~Hayman, N.~Danikas, {\it Domination on sets and in $\mathcal{H}^p$},  Results Math. \textbf{34} (1998), 85-90.
 
\bibitem{HKZ} H.~Hedenmalm, B.~Korenblum, K.~Zhu, {\it Theory of Bergman spaces}, Graduate Texts in Mathematics, Vol. \textbf{199}, Springer.

\bibitem{H2} A.~Hinkkanen, {\it On a Maximum Principle in Bergman Space}, J. Analyse Math. \textbf{79} (1999), 335-344.

\bibitem{KS} D.~Khavinson and M.~Stessin, {\it Certain linear extremal problems in Bergman spaces of analytic functions},  Indiana Univ. Math. J. \textbf{46} (1997), 933-974. 

\bibitem{K1} B.~Korenblum, {\it A Maximum Principle for the Bergman Space}, Publ. Math. \textbf{35} (1991), 479-486.

\bibitem{K2} B.~Korenblum, R.~O'Neil, K.~Richards and K.~Zhu, {\it Totally Monotone Functions with applications in the Bergman Space}, Trans. Amer. Math. Soc. \textbf{337} (1993), 795-806.

\bibitem{K3} B.~Korenblum and K.~Richards, {\it Majorization and Domination in the Bergman Space}, Proc. Amer. Math. Soc. \textbf{117} (1993), 153-158.

\bibitem{J} J.~Matero, {\it On Korenblum's Maximum Principle for the Bergman space}, Arch. Math. \textbf{64} (1995) 337?40.

\bibitem{SC} A.~Schuster, {\it The Maximum Principle for the Bergman space and the M\"{o}bius pseudodistance for the annulus}, Proc. Amer. Math. Soc. \textbf{134} (2006) 3525-3530.

\bibitem{SCH} W.~Schwick, {\it On Korenblum's Maximum Principle}, Proc. Amer. Math. Soc. \textbf{125} (1997), 2581-2587.

\bibitem{SH} T.~B.~Sheil-Small, {\it Solution to a Bergman space extremal problem for non-vanishing functions},  Bull. Lond. Math. Soc. \textbf{44} (2012) 763-774.

\bibitem{V1} D.~Vuk\'{o}tic, {\it Linear extremal problems for Bergman spaces},  Exposition. Math. \textbf{14} (1996), 313-352.  

\bibitem{W1} C.~Wang, {\it Refining the constant in a Maximum Principle for the Bergman space}, Proc. Amer. Math. Soc. \textbf{132} (2004) 853-55.

\bibitem{W2} C.~Wang, {\it On Korenblum's constant}, J. Math. Anal. Appl. \textbf{296} (2004), 262-264.

\bibitem{W3} C.~Wang, {\it An upper bound on Korenblum's constant}, Integral Equations Operator Theory \textbf{49} (2004), 561-563.

\bibitem{W4} C.~Wang, {\it On Korenblum's Maximum Principle}, Proc. Amer. Math. Soc. \textbf{134} (2006), 2061?066.

\bibitem{W5} C.~Wang, {\it On a Maximum Principle for Bergman spaces with small exponents}, Integral Equations Operator Theory \textbf{59} (2007), 597-601.

\bibitem{W6} C.~Wang, {\it Domination in the Bergman space and Korenblum's constant}, Integral Equations Operator Theory \textbf{61} (2008) 423-432.

\bibitem{W7} C.~Wang, {\it Behavior of the constant in Korenblum's Maximum Principle}, Math. Nachr. \textbf{281} (2008) 447-454.

\bibitem{W8} C.~Wang, {\it Some results on Korenblum's Maximum Principle}, J. Math. Anal. Appl. \textbf{373} (2011) 393-398.

\end{thebibliography}
\end{document}